\def\titlerunning#1{\gdef\titrun{#1}}
\def\author#1{\gdef\autrun{\def\and{\unskip, }#1}\gdef\@author{#1}}
\def\address#1{{\def\and{\\\hspace*{18pt}}\renewcommand{\thefootnote}{}%
		\footnote {#1}}%
	\markboth{\autrun}{\titrun}}
\def\email#1{e-mail: #1}
\def\subjclass#1{{\renewcommand{\thefootnote}{}%
		\footnote{\emph{Mathematics Subject Classification (2010):} #1}}}
\def\keywords#1{\par\medskip
	\noindent\textbf{Keywords.} #1}
\newtheorem{theorem}{Theorem}[section]
\newtheorem{lemma}[theorem]{Lemma}
\newtheorem{definition}[theorem]{Definition}
\newtheorem{proposition}[theorem]{Proposition}
\newtheorem{remark}[theorem]{Remark}
\newtheorem{example}[theorem]{Example}
\newcommand{\R}{\mathbb{R}}
\numberwithin{equation}{section}
\newcommand{\PreserveBackslash}[1]{\let\temp=\\#1\let\\=\temp}
\newcolumntype{C}[1]{>{\PreserveBackslash\centering}p{#1}}
\newcolumntype{R}[1]{>{\PreserveBackslash\raggedleft}p{#1}}
\newcolumntype{L}[1]{>{\PreserveBackslash\raggedright}p{#1}}
\newcolumntype{I}{!{\vrule width 1pt}}
\newlength\savedwidth
\begin{document}
	
	
	\baselineskip=15pt
	
	
	\titlerunning{   }
	
	\title{Dynamics of globally minimizing orbits in 
	 contact Hamiltonian systems}

\author{Yang Xu \and Jun Yan \and Kai Zhao}

	\date{\today}	
	\maketitle
	\address{Yang Xu: School of Mathematical Sciences, Fudan University, Shanghai 200433, China; \email{xuyang$\_$@fudan.edu.cn}
\and Jun Yan: School of Mathematical Sciences, Fudan University, Shanghai 200433, China;
\email{yanjun@fudan.edu.cn}
\and
Kai Zhao:  School of Mathematical Sciences, Tongji University, Shanghai  200092 , China;
\email{zhaokai93@tongji.edu.cn}
}

\subjclass{37J55; 35F21; 35D40}

	\begin{abstract}
In this paper, we  study the asymptotic behavior of globally minimizing orbits of contact Hamiltonian  systems. Under some assumptions, we prove that the $\omega$-limit set of 
 globally minimizing orbits is contained  in the set of semi-static orbits.
    \keywords{contact Hamiltonian systems, globally minimizing orbits, Ma\~n\'e set}
	\end{abstract}


 	\section{Introduction and main results}
 	\subsection{The motivation of this paper}
Let $M$ be a connected, closed and smooth manifold and  $H:T^*M\times \R \to \R$ be a $C^2$ function called a contact Hamiltonian. 
The standard contact form on $T^*M\times \R$ is the 1-form $\alpha=d u-p d x $.  Each $C^2$ function $H(x,p,u)$ determinates a unique vector field $X_H$ defined by the conditions
$$
\mathcal{L}_{X_H}\alpha=- \frac{\partial H}{\partial u} \alpha,\quad \alpha(X_H)=-H,
$$
where $\mathcal{L}_{X_H}$ denotes the Lie derivative along the contact vector field $X_H$.  In local coordinates, the contact vector field $X_H$ generated by $H$  read:
\begin{equation} \label{eq:ode1}
	X_H:\left\{
	\begin{aligned}
		\dot x&=\frac{\partial H }{\partial p}(x,p,u),\\
		\dot p &=-\frac{\partial H }{\partial x}(x,p,u)-\frac{\partial H }{\partial u}(x,p,u) \cdot p,  \\ 
		\dot u&= \frac{\partial H }{\partial p}(x,p,u)\cdot p  -H(x,p,u),
	\end{aligned}
	\right.\quad (x,p,u)\in T^*M \times \R.
\end{equation}

In this paper, we study the $\omega$-limit set of global minimizer. We will reduce the problem of the existence of semi-infinite minimizing orbits to the problem of large time behavior of viscosity solutions. In order to describe our result clearly, we first recall some known work closely related to ours.

 	\begin{itemize}
		\item For classical Hamilton-Jacobi equations (where Hamiltonians are defined on the cotangent bundle $T^*M$ of $M$):
		\begin{itemize}
		\item  If $F(x,p)$ is a $C^2$  Hamiltonian defined on $T^*M$ satisfying Tonelli conditions with respect to the argument $p$.
	In \cite{M,CDI},  the authors prove that
\begin{itemize}
\item[$(\star)$] $\omega$-limit set of an orbit in positive globally minimizers  is contained in the set of static  curves.
\end{itemize}
 
 See \cite{CDI,B1,B3} for details.

   \item If $F(x,p,t)$ is a $C^2$  Hamiltonian defined on $  T^*M \times \mathbb{R}$ satisfying Tonelli condition with respect to the argument $p$. In \cite{B1}, Bernard shows that the $\omega$-limit set of positive globally minimizing orbits  is not equal to the Ma\~n\'e set in some cases. 
		\end{itemize}
		\item For contact Hamilton-Jacobi equations (where Hamiltonians are defined  on $T^* M\times\mathbb{R}$),

		  \begin{itemize}
		  \item   In \cite{SS}, the authors find compact maximal attractor for discounted systems. In \cite{WWY2, JY}, the authors generalize the results of \cite{SS} to contact Hamiltonian system. These works are under assumption 
		  $$ 
		  \frac{\partial H}{\partial u}(x,p,u) >0, \quad \forall (x,p,u)\in T^*M \times \R.
		  $$
		\end{itemize}
	\end{itemize}
 This naturally  leads to a question:
 \begin{center}
 	{\it Can we confirm the conclusion $(\star)$ with contact Hamiltonians under suitable assumptions?}
 \end{center} 
 	 	   This is  the motivation  and purpose of this  paper. 
 	
\medskip

Different from the method of   the Ma\~n\'e action potential in \cite{CDI},  the novelty here is that our research study  the conclusion $(\star)$  in view of  the Large-time behavior of the solution of \eqref{1-1}, see Section 2.

  \subsection{Main results}
 	
 Let $H=H(x,p,u)$ be a $C^3$ function on $T^*M\times \R$ satisfying
 
\begin{itemize}
\item[(H1)]  the Hessian $\frac{\partial^2 H}{\partial p^2} (x,p,u)$ is positive definite for each $(x,p,u)\in T^*M\times\R$;
\item[(H2)]   for each $(x,u)\in M\times\R$, $H(x,p,u)$ is superlinear in $p$;
\item[(H3)]  there is a constant $\kappa >0 $  such that
$$
\Big| \frac{\partial H}{\partial u}(x, p,u)\Big| \leqslant \kappa ,\quad \forall (x,p,u)\in T^*M\times \R.
$$
\end{itemize}

	Consider the  Hamilton-Jacobi equation of the evolution form
	\begin{align}\label{1-1}\tag{$\mathrm{HJ_e}$}
		\partial_t w(x,t)+H(x,\partial_x w(x,t),w(x,t))=0,\quad x\in M,\ t\in[0,+\infty),
	\end{align}
	{and the} stationary form 
	\begin{align}\label{1-2}\tag{$\mathrm{HJ_s}$}
		H(x,\partial_x u(x),u(x))=0,\quad x\in M.
	\end{align}
	Here and anywhere, solutions to equations \eqref{1-1} and \eqref{1-2} should always be understood by the viscosity  solutions, which defined by Crandall and Lions \cite{CL}.

  Let $\mathcal{S}^-$ be the set of viscosity solutions of equation \eqref{1-2}. For any $u_-\in\mathcal{S}^-$, define
	\[
	\Lambda_{u_-}:=\operatorname{cl}\Big(\big\{(x,p,u): x   \in \mathcal{D}_{u_{-}}, p=d_x u_{-}(x),u=u_{-}(x)\big\}\Big),
	\]
	where $\operatorname{cl}(B)$ denotes the closure of $B $ and $\mathcal{D}_{u_-}$ denotes the set of differentiable points of $u_-$. Note that $u_-$ is Lipschitz continuous and $M$ is compact. Thus, $\Lambda_{u_-}$ is a compact subset of $T^*M \times\R$. Moreover, $\Lambda_{u_-}$ is negatively invariant under $\Phi^H_t$, where 
	$\Phi^H_t$ stands for the local flow of the contact Hamiltonian equations \eqref{eq:ode1}. Following \cite{M,WWY2}, we can define positive globally minimizing orbits,  semi-static orbits and Ma\~n\'e set $\widetilde{\mathcal{N}}_{u_-}$ for contact Hamiltonian system \eqref{eq:ode1} by using the implicit action functions introduced in \cite{WWY} , see Definition \ref{def:1} and Definition \ref{def:2} for details.  
Moreover, we assume that
	\begin{itemize}
		\item [(A1)]
				$
	\displaystyle	\frac{\partial H}{\partial u} (x,p,u)>0 , \quad \forall (x,p,u)\in E:=\{ (x,p,u)\in T^*M\times \R :H(x,p,u)=0  \}.$
		\item [(A2)] 
		$
		\displaystyle \frac{\partial H}{\partial u} (x,p,u)\geqslant  0 , \quad \forall (x,p,u)\in T^*M \times \R.  
		$
		\item [(B)] Equation	\eqref{1-2}  admits at least one solution $u_0$.
	\end{itemize}
In particular, if there exist $u_1,u_2\in \R$ satisfying
$\displaystyle \max_{x\in M}	H(x,0,u_1) <0< \min_{x\in M}	H(x,0,u_2),$
then  condition (B) holds \cite{WY2021}.

The following results are considered under the assumptions of conditions (A1) and (B). These conclusions are also valid for conditions (A2) and (B).  
 	\begin{theorem} \label{thm2}
 		For each  orbit $(x(t),p(t),u(t)),t\geqslant 0$ in the set of  (positively) global minimizers, there exists a solution $u_-\in  \mathcal{S}^-$ such that the $\omega$-limit set of $(x(0),p(0),u(0))$  is  contained in Ma\~n\'e set $\widetilde{\mathcal{N}}_{u_-}$, i.e. \
 		$$
 		\omega (x(0),p(0),u(0)) \subset \widetilde{\mathcal{ N}}_{u_-} \subset \Lambda_{u_-}. 
 		$$
 	\end{theorem}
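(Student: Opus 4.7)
\smallskip
\noindent\textbf{Proof strategy.}
My plan is to convert the question about the $\omega$-limit of a globally minimizing orbit into a question about the large-time behavior of a suitable viscosity solution of \eqref{1-1}, following the announced strategy in Section~2 of the paper. Fix a globally minimizing orbit $\gamma(t)=(x(t),p(t),u(t))$, $t\ge 0$. The first step is to build a canonical initial datum from the orbit by using the implicit action functions $h_{x(0),u(0)}^{t}(\,\cdot\,)$ of \cite{WWY}: set, for some large $T>0$,
\[
w_{0}(x):=h_{x(0),u(0)}^{T}(x)\quad\text{or}\quad w_{0}(x):=\inf_{y\in M}\bigl\{h_{y,u_{0}(y)}^{T}(x)\bigr\},
\]
where $u_{0}$ is the solution of \eqref{1-2} furnished by hypothesis (B). By construction $w_{0}$ is Lipschitz and the minimizing orbit $\gamma$ lies on a characteristic of the evolution \eqref{1-1} issued from $w_{0}$; consequently $u(t)=T_{t}w_{0}(x(t))$ along $\gamma$, where $T_{t}$ denotes the Lax-Oleinik semigroup associated with $H$.

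The second step is to invoke the large-time convergence theorem (available under (A1) and (B), compare with \cite{WWY2,JY}): the solution $w(\cdot,t)=T_{t}w_{0}(\cdot)$ of \eqref{1-1} converges uniformly on $M$, as $t\to+\infty$, to a viscosity solution $u_{-}\in\mathcal{S}^{-}$ of the stationary equation \eqref{1-2}. Taking any sequence $t_{n}\to+\infty$ with $\gamma(t_{n})\to(x^{*},p^{*},u^{*})$, the identity $u(t_{n})=w(x(t_{n}),t_{n})$ then passes to the limit and forces $u^{*}=u_{-}(x^{*})$. Since $\gamma$ is a calibrating curve for the family $w(\cdot,t)$, a classical semiconvexity/Lipschitz argument shows that in fact $p(t_{n})\in\partial_{x}u_{-}(x^{*})$ (i.e.\ the superdifferential of $u_{-}$ at $x^{*}$), whence the accumulation point lies in $\Lambda_{u_{-}}$.

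The third step upgrades ``lying in $\Lambda_{u_{-}}$'' to ``lying in the Ma\~n\'e set $\widetilde{\mathcal{N}}_{u_{-}}$''. For this, one uses that the negative orbit through $(x^{*},p^{*},u^{*})$ is obtained as a limit of time-translates $\gamma(\cdot+t_{n})$, each of which is still globally minimizing. A standard limiting argument with the implicit action function shows that the whole limiting orbit is $u_{-}$-calibrated on every compact interval, which is precisely the semi-static property used to define $\widetilde{\mathcal{N}}_{u_{-}}$ in Definition~\ref{def:2}. Compactness of $\Lambda_{u_{-}}$ and negative invariance under $\Phi^{H}_{t}$ then give the required inclusion $\omega(\gamma(0))\subset \widetilde{\mathcal{N}}_{u_{-}}\subset\Lambda_{u_{-}}$.

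The main obstacle I anticipate is the second step: the classical large-time behavior theorems are stated with a \emph{fixed} initial datum, whereas here the datum $w_{0}$ is built from the orbit itself, and the convergence must be compatible with the calibration identity $u(t)=w(x(t),t)$. In particular, under the weaker hypothesis (A2) (where $\partial_{u}H\ge 0$ is allowed to vanish off the critical set) the convergence theorem is not as clean, and one has to combine it with the monotonicity of $t\mapsto u(t)-u_{-}(x(t))$ along calibrated curves to rule out drift in the $u$-direction. Once the correct $u_{-}$ is selected, the remainder of the proof is a relatively routine limiting argument using semicontinuity of the action and the equi-Lipschitz bounds on viscosity solutions guaranteed by (H1)--(H3).
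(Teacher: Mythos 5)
Your overall strategy is the same as the paper's: along a positive global minimizer one has $u(t)=h_{x(0),u(0)}(x(t),t)=T^-_{t-1}h_{x(0),u(0)}(x(t),1)$, so the large-time convergence of $T^-_t$ gives $h_{x(0),u(0)}(\cdot,t)\to u_-$, the limit of the time-translated orbit is globally minimizing and calibrated by $u_-$, hence semi-static, and one concludes via the structure of the Ma\~n\'e set. Note that the convergence you invoke is exactly Theorem~\ref{thm1} (resp.\ Proposition~\ref{prop:SWY} under (A2)), proved in Section~2 of the paper; it does not follow from \cite{WWY2,JY}, which assume strict monotonicity in $u$ on all of $T^*M\times\R$ rather than only on the zero level set.

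The genuine gap is your treatment of the momentum component, which is where the paper does real work. You claim that $p(t_n)\to p^*\in\partial^+_x u_-(x^*)$ and that this places the accumulation point in $\Lambda_{u_-}$: that inference fails as stated, since $\Lambda_{u_-}$ is the closure of the graph of $d_xu_-$ over differentiability points (the reachable gradients), whereas the superdifferential is their convex hull, so superdifferential membership alone does not give a point of $\Lambda_{u_-}$ — and certainly not of $\widetilde{\mathcal{N}}_{u_-}$, which by \eqref{eq:Mane} requires $u_-(x^*)=u_+(x^*)$ and $p^*=d_xu_{\pm}(x^*)$, facts your sketch never establishes. Your step~3 points in the right direction (the flow orbit through $(x^*,p^*,u^*)$ projects to a globally minimizing, $u_-$-calibrated, hence semi-static curve), but it attributes $\widetilde{\mathcal{N}}_{u_-}$ to Definition~\ref{def:2}, which only defines semi-static curves, and never reconnects to $p^*$; the paper closes this by a dedicated contradiction argument (strict triangle-type inequality for the implicit action) identifying $\bar p=\frac{\partial L}{\partial v}(\bar x,\dot{\bar x},\bar u)$, and then invokes the classification \eqref{eq:Mane} from \cite{WWY5}. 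A more minor omission: you skip the boundedness of $(u(t),p(t))$ on $[0,+\infty)$ (the paper's Lemma~\ref{lem-bound-minimizer}, obtained from the convergence of $T^-_t$, the equi-Lipschitz estimate of \cite{WWY1} and Proposition~\ref{prop3.1}); this is what makes the $\omega$-limit set nonempty and, together with equi-Lipschitz continuity, what justifies passing to the limit in $u(t_n+t)=h_{x(t_n+s),u(t_n+s)}(x(t_n+t),t-s)$ along the moving points $x(t_n+t)$, a step you treat as automatic.
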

 		\begin{theorem}\label{cor1}	
		For any $(x_0,u_0)\in M\times \R$, there exist  $p_0\in T^*_xM$ and $(x(t), p(t),u(t))$ such that $(x(0), p(0), u(0)) = (x_0,p_0,u_0)$ and there exists a solution $u_-\in \mathcal{S}^-$  satisfying
		$$
		\omega  (x_0,p_0,u_0) \subset \widetilde{\mathcal{N}}_{u_-} \subset \Lambda_{u_-} .
		$$
 		\end{theorem}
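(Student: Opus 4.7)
The plan is to reduce Corollary \ref{cor1} to Theorem \ref{thm2}: it is enough to exhibit, for any given $(x_0,u_0)\in M\times\mathbb{R}$, a covector $p_0\in T^*_{x_0}M$ such that the contact orbit $(x(t),p(t),u(t))$ starting from $(x_0,p_0,u_0)$ is a positively globally minimizing orbit, since Theorem \ref{thm2} then supplies the desired $u_-\in\mathcal{S}^-$. The main work is thus a Mañé-type lemma: every $(x_0,u_0)\in M\times\mathbb{R}$ is the initial point of some positively global minimizer.

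To produce such an orbit I would carry out an exhaustion in $T$. For each $T>0$, fix any endpoint $y_T\in M$ (e.g.\ $y_T=x_0$) and invoke the implicit variational principle of \cite{WWY} with boundary data $(x_0,u_0)$ at $t=0$ and $\gamma_T(T)=y_T$; this produces an absolutely continuous minimizer $\gamma_T:[0,T]\to M$ whose contact lift $(x_T,p_T,u_T)$ satisfies $(x_T(0),u_T(0))=(x_0,u_0)$, has initial momentum $p_T^0=\partial L/\partial v(x_0,\dot\gamma_T(0^+),u_0)$, and realizes the implicit action $u_T(t)=h_{x_0,u_0,t}(x_T(t))$ for every $t\in[0,T]$; in particular it is positively minimizing on $[0,T]$. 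Assuming the uniform bound $|p_T^0|\le C$ discussed below, a Bolzano-Weierstrass extraction yields $p_{T_n}^0\to p_0$. Continuous dependence of $\Phi^H_t$ on initial data then gives a full orbit $(x(t),p(t),u(t))$ from $(x_0,p_0,u_0)$ defined for all $t\ge 0$, and for each fixed $t$ the joint continuity of $h_{x,u,t}(y)$ passes the identity $u_{T_n}(t)=h_{x_0,u_0,t}(x_{T_n}(t))$ to the limit, so the limit orbit is positively globally minimizing. Theorem \ref{thm2} then finishes the proof.

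The hard part will be the uniform momentum estimate. The constant curve $\gamma\equiv x_0$ produces a cheap upper bound on $u_T(T)-u_0$, while the existence of a solution of \eqref{1-2} furnished by assumption (B), together with the sign condition (A1)/(A2) and the contact comparison principle, produces a matching lower bound; this confines $u_T$ to a compact slab independent of $T$. On that slab the superlinearity (H2) of the Lagrangian, combined with the bound (H3) on $\partial H/\partial u$ (which controls how $H(x_T,p_T,u_T)$ drifts along the flow), upgrades the integrated velocity bound coming from superlinearity into a pointwise bound on $|\dot\gamma_T(0^+)|$ via a Tonelli-type argument, and the Legendre transform converts this into the wanted estimate on $|p_T^0|$. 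This is the delicate point where I expect most of the technical effort to lie, since (H1)--(H3), (A1)/(A2) and (B) all enter decisively.
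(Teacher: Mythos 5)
Your strategy is the same as the paper's: construct finite-horizon minimizers issued from $(x_0,u_0)$ via the implicit variational principle, extract a convergent subsequence of initial momenta, pass the minimizing identities to the limit by continuous dependence, and then invoke Theorem \ref{thm2}. Two remarks. First, the step you single out as the hard technical core (the uniform bound on the initial momenta) does not require a new Tonelli-type estimate: since the restriction of a minimizer of $h_{x_0,u_0}(\cdot,T)$ to a fixed short window is again a minimizer of $h_{x_0,u_0}(\gamma_T(s),s)$ with $(x_0,u_0)$ fixed and $s$ in a fixed compact interval, the a priori compactness already quoted in the paper as Proposition \ref{prop3.1} (i.e.\ \cite[Lemma 2.1]{WWY1}) gives $|p_T(0)|\leqslant C$ uniformly in $T$ at once; this is exactly how the paper proceeds, and your planned detour through comparison with the solution from (B), (H2)--(H3) and a Legendre-transform argument is unnecessary (though not wrong in spirit).

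Second, there is a genuine (if easily repaired) gap at the limiting step: you only pass the base-point identity $u_{T_n}(t)=h_{x_0,u_0}(x_{T_n}(t),t)$ to the limit, but Definition \ref{def:1} requires $u(t_2)=h_{x(t_1),u(t_1)}(x(t_2),t_2-t_1)$ for \emph{all} intermediate pairs $0<t_1<t_2$, and this does not follow from the $t_1=0$ identity alone without an additional Markov-property argument for the implicit action. The finite-horizon minimizers do satisfy the full family of intermediate identities (this is part of \cite[Theorem 2.1]{WWY2}), and the clean fix --- the one the paper uses --- is to pass precisely these identities
$u_{T_n}(t_2)=h_{x_{T_n}(t_1),u_{T_n}(t_1)}(x_{T_n}(t_2),t_2-t_1)$ to the limit for each fixed $0<t_1<t_2$, using the continuity of $(x_0,u_0,x)\mapsto h_{x_0,u_0}(x,t_2-t_1)$ together with the convergence of $(x_{T_n}(t_i),p_{T_n}(t_i),u_{T_n}(t_i))$ furnished by continuous dependence. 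With that adjustment your argument coincides with the paper's proof.
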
 
 		\begin{remark}
 			Under condition  (A1) and (B), the solution of \eqref{1-2} is unique by Theorem \ref{thm1}.
 		\end{remark}
 		\medskip
	
	\section{Convergence of solutions}
Our method depends on relationships between the Large-time behavior of the solution and the Large-time dynamics of globally minimizing orbits in  contact Hamiltonian systems. We recall some definitions first.
Under assumptions (H1)-(H3), the authors \cite{WWY1} introduce two semigroups of operators $\{T_t^-\}_{t\geqslant 0}$ and $\{T_t^+\}_{t\geqslant 0}$.  It  has been  proved  in \cite[Theorem 1.1]{WWY1} that the function  $(x,t)\mapsto T^-_t\varphi(x)$ is the unique viscosity solution   of the evolutionary Hamilton-Jacobi equation \eqref{1-1} with	$\omega (x,0)=\varphi(x)$. As $\mathcal{S}^-$ denotes the set of viscosity solutions to \eqref{1-2}, let  $\mathcal{S}^+$ be the set of viscosity solutions to $ H(x,-\partial_x u,-u)=0, x\in M.$ It is well-known 
    that  $u\in\mathcal{S}^-$ if and only if $T^-_tu=u$ for all $t\geqslant 0$; $v\in\mathcal{S}^+$ if and only if $T^+_tv=v$ for all $t\geqslant 0$.
	
		\begin{theorem} \label{thm1}
	  Assume (A1) (B) hold, then the solution of $ H(x,\partial_x u, u)=0$ is unique, which is  denoted by $u_-$. And for any $\varphi\in C(M,\R) $, 
	   $$
	   \lim_{t\to +\infty} T_t^- \varphi(x) =u_-(x) , \quad x\in M.
	   $$
 	\end{theorem}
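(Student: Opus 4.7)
The plan is to split the proof into two parts: uniqueness of the stationary solution $u_-$, and uniform convergence $T_t^- \varphi \to u_-$.

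For uniqueness, suppose $u_1, u_2 \in \mathcal{S}^-$, and let $x_0 \in M$ realize $\max_M (u_1 - u_2)$. A standard doubling-variables argument---setting $\Phi_\epsilon(x,y) := u_1(x) - u_2(y) - d(x,y)^2/(2\epsilon)$ in local coordinates (or via a Riemannian distance) near $x_0$, locating its maximum $(x_\epsilon, y_\epsilon)$, and letting $\epsilon \to 0$---produces a cotangent vector $p_0 \in T^*_{x_0}M$ satisfying
\[
H(x_0, p_0, u_1(x_0)) \leq 0 \leq H(x_0, p_0, u_2(x_0)).
\]
Assume for contradiction that $u_1(x_0) > u_2(x_0)$. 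The scalar function $f(u) := H(x_0, p_0, u)$ then satisfies $f(u_2(x_0)) \geq 0$ and $f(u_1(x_0)) \leq 0$ on $[u_2(x_0), u_1(x_0)]$. By (A1), every zero of $f$ in this interval lies in $E$ and is a strict crossing from negative to positive values. A short case analysis on whether the endpoint values vanish shows this sign configuration is impossible, yielding a contradiction. Thus $u_1 \leq u_2$ on $M$; by symmetry $u_1 \equiv u_2 =: u_-$.

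For the convergence statement, the strategy has three ingredients. First, I would establish equi-Lipschitz continuity of $\{T_t^- \varphi\}_{t \geq 1}$ in $x$ from standard Tonelli-type gradient bounds for viscosity solutions of \eqref{1-1} valid after a positive waiting time. Second, I would establish uniform boundedness $\sup_{t \geq 0} \|T_t^- \varphi\|_\infty < \infty$; together with equi-Lipschitz continuity this yields relative compactness of the orbit $\{T_t^- \varphi\}$ in $C(M,\R)$ via Arzel\`{a}--Ascoli, so the $\omega$-limit set $A$ of $\varphi$ under $\{T_t^-\}$ is nonempty, compact, and invariant under each $T_s^-$ by continuity of the semigroup in its initial datum. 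Third, by a Lyapunov-type argument exploiting the strict positivity of $\partial_u H$ on $E$---for instance, showing that $t \mapsto \max_x (T_t^- \varphi - u_-)$ is eventually non-increasing and $t \mapsto \min_x (T_t^- \varphi - u_-)$ eventually non-decreasing, and then analyzing the extrema on $A$---every $u_\infty \in A$ is a fixed point of each $T_s^-$, hence a stationary viscosity solution. By the uniqueness just proved, $A = \{u_-\}$, which yields $T_t^- \varphi \to u_-$ uniformly on $M$.

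The main obstacle is the second ingredient: uniform boundedness of $T_t^- \varphi$ in $t$. The naive bound $\|T_t^- \varphi - u_-\|_\infty \leq e^{\kappa t}\|\varphi - u_-\|_\infty$ following from (H3) alone grows in $t$, and (A1) supplies dissipation only along the zero-energy surface $E$, not globally. I expect the argument to trace characteristic trajectories realizing the extrema $\max_x (T_t^- \varphi - u_-)$ and $\min_x (T_t^- \varphi - u_-)$ and use the contact flow \eqref{eq:ode1} to show they are drawn toward $E$, where $\partial_u H > 0$ forces genuine decay of the extremal gap. Executing this analysis through the variational/characteristic representation of $T_t^-$ from \cite{WWY1} is the technical heart of the proof.
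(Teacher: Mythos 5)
Your uniqueness argument is correct and genuinely different from the paper's: the doubling-variables inequalities $H(x_0,p_0,u_1(x_0))\leqslant 0\leqslant H(x_0,p_0,u_2(x_0))$ at a maximum point of $u_1-u_2$, combined with the observation that by (A1) every zero of $u\mapsto H(x_0,p_0,u)$ is an isolated strict crossing from negative to positive, do rule out $u_1(x_0)>u_2(x_0)$. The paper never argues this way; there uniqueness is a corollary (Lemma \ref{lem2.5}) of the convergence statement for data below $u_-$ (Lemma \ref{lem2.3}), so for that half your route is more direct and self-contained.

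The convergence half, however, has a genuine gap --- in fact two. First, the uniform bound $\sup_{t\geqslant 0}\|T_t^-\varphi\|_\infty<\infty$, which you yourself defer as ``the technical heart,'' is exactly what the paper has to work for, and it is not obtained by tracing extremal characteristics toward $E$ (nothing guarantees a priori that minimizers approach the zero level set --- that is close to the conclusion being proved). The paper gets it through the interplay of the two semigroups: Lemma \ref{lem2.1} gives only a local basin of radius $\delta_0$ around $u_-$ via the barriers $u_-\pm\delta_0 e^{-mt}$; Lemma \ref{lem2.2} shows $T_t^+(u_--\delta)\to-\infty$, whence every $\varphi\leqslant u_-$ converges to $u_-$ (Lemma \ref{lem2.3}); and unboundedness of $T_t^-\varphi$ from below or above is then excluded using Proposition \ref{A7} together with the nonemptiness of $\{x: u_-(x)=u_+(x)\}$. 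Second, even granting boundedness, your $\omega$-limit step does not close: invariance of the limit set $A$ under each $T_s^-$ does not make its elements fixed points, and the proposed Lyapunov quantities are not monotone under (A1) alone --- off $E$ one may have $\partial H/\partial u<0$, so $\max_x(T_t^-\varphi-u_-)$ can increase while the orbit is far from $u_-$, and ``eventually non-increasing'' is essentially equivalent to the convergence you are trying to establish. The paper's bounded case instead invokes Proposition \ref{A8} ($\liminf_t T_t^-\varphi\in\mathcal{S}^-$, $\limsup_t T_t^-\varphi$ a subsolution) and uses $T^+$ to push the $\limsup$ into the $\delta_0$-basin of Lemma \ref{lem2.1}. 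Without an argument of this type, or some other source of global dissipation, your ingredients (ii) and (iii) remain unproven.
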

 		\begin{proposition}\cite{SWY}\label{prop:SWY}
		Assume (A2)(B) hold, then  for any $\varphi \in C(M,\R) $,  there exists $u_-\in \mathcal{S}^- $ such that 
		$$
	 \lim_{t\to +\infty}  T_t^{-} \varphi (x)=u_-(x), \quad x\in M.
		$$   
 		\end{proposition}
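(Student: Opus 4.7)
My plan is to organize the proof in four stages, centered on the monotonicity assumption (A2) and the reference solution $u_0$ supplied by (B).

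First, I would establish precompactness of the orbit $\{T_t^-\varphi\}_{t\geq 0}$ in $C(M,\R)$. Setting $C:=\|\varphi-u_0\|_\infty$, the monotonicity (A2) upgrades $u_0+C$ to a supersolution and $u_0-C$ to a subsolution of \eqref{1-2}, since $H(x,du_0,u_0\pm C)$ is $\geq 0$ (resp.\ $\leq 0$) by the fact that $H(x,du_0,u_0)=0$. The comparison principle for \eqref{1-1}, available under (H1)--(H3), then sandwiches $u_0-C\leq T_t^-\varphi\leq u_0+C$ uniformly in $t$. Combined with the uniform Lipschitz regularity of $T_t^-\varphi$ for $t\geq 1$ that follows from the implicit variational representation of $T_t^-$ in \cite{WWY1}, Arzel\`a--Ascoli yields a precompact orbit and a nonempty $\omega$-limit set $\omega(\varphi)\subset C(M,\R)$.

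Second, I would extract the $L^\infty$-nonexpansion $\|T_t^-\varphi-T_t^-\psi\|_\infty \leq \|\varphi-\psi\|_\infty$ of the semigroup. The key computation: for any $c>0$, the function $w(x,t):=T_t^-\varphi(x)+c$ is a viscosity supersolution of \eqref{1-1} with initial data $\varphi+c$, because $\partial_t w + H(x,\partial_x w, w) = H(x,\partial_x T_t^-\varphi, T_t^-\varphi+c) - H(x,\partial_x T_t^-\varphi, T_t^-\varphi)\geq 0$ by (A2); comparison then gives $T_t^-(\varphi+c)\leq T_t^-\varphi+c$, and the symmetric lower estimate $T_t^-(\varphi-c)\geq T_t^-\varphi-c$ combines with it to yield the nonexpansion. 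Applied with $\psi\in\mathcal{S}^-$ a fixed point of the semigroup, this shows that $t\mapsto \|T_t^-\varphi-\psi\|_\infty$ is nonincreasing.

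Third---and this is the main obstacle---I would show $\omega(\varphi)\subset\mathcal{S}^-$. I expect this to require a Namah--Roquejoffre--type argument adapted to the contact setting. The idea is to take $u^\ast\in\omega(\varphi)$, read $T_s^-u^\ast(x)$ through the minimizing curves in the implicit action function of \cite{WWY}, and use the strict $p$-convexity of $H$ from (H1) together with (A2) to argue that $\|T_s^-u^\ast-u_0\|_\infty$ is \emph{strictly} smaller than $\|u^\ast-u_0\|_\infty$ for some $s>0$ unless every extremal minimizing curve already traces a static orbit. On the other hand, by stage two, the function $t\mapsto\|T_t^-\varphi-u_0\|_\infty$ converges to a limit $L_\infty$, and every $v\in\omega(\varphi)$ satisfies $\|v-u_0\|_\infty = L_\infty$; by invariance of $\omega(\varphi)$ under $\{T_t^-\}_{t\geq 0}$ the same is true along its orbit, ruling out the strict alternative and forcing $u^\ast\in\mathcal{S}^-$.

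Fourth, I would promote subsequential convergence to full convergence. Pick any $u_-\in\omega(\varphi)\subset\mathcal{S}^-$ and a sequence $t_n\to\infty$ with $T_{t_n}^-\varphi\to u_-$ uniformly. Because $T_t^-u_- = u_-$ for all $t\geq 0$, the nonexpansion from stage two gives that $t\mapsto\|T_t^-\varphi-u_-\|_\infty = \|T_t^-\varphi-T_t^-u_-\|_\infty$ is nonincreasing, so it has a single limit equal to $\lim_n\|T_{t_n}^-\varphi-u_-\|_\infty = 0$. Hence $T_t^-\varphi\to u_-$ uniformly on $M$, which is the desired conclusion.
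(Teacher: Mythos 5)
The paper does not prove this proposition at all: it is imported verbatim from \cite{SWY}, so there is no in-paper argument to compare against. Judged on its own merits, your Stages 1, 2 and 4 are correct and standard: (A2) makes $u_0\pm C$ a super/subsolution of \eqref{1-2}, comparison gives the uniform bound, the variational representation gives equi-Lipschitz continuity for $t\geqslant 1$, the monotonicity in $u$ yields the $L^\infty$-nonexpansion $\|T_t^-\varphi-T_t^-\psi\|_\infty\leqslant\|\varphi-\psi\|_\infty$, and nonexpansion plus one uniformly convergent subsequence to a fixed point does upgrade to full convergence. The entire difficulty of the proposition is concentrated in your Stage 3, and there you have a genuine gap rather than a proof.

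The problem is that (A2) permits $\frac{\partial H}{\partial u}\equiv 0$, in which case the statement \emph{is} Fathi's convergence theorem for the critical equation $H(x,\partial_x u)=0$. In that regime your proposed dichotomy --- ``$\|T_s^-u^*-u_0\|_\infty$ strictly decreases unless every minimizing extremal traces a static orbit'' --- is not available: the semigroup is exactly nonexpansive, $\|T_t^-\varphi-u_0\|_\infty$ typically stabilizes at a strictly positive value (namely $\|u_--u_0\|_\infty$ for the limit $u_-\neq u_0$) without any strict decrease, and constancy of a sup-distance to one particular solution does not force $T_s^-u^*=u^*$, since the sup can be attained at a single stationary point while the function moves elsewhere. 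Fathi's proof, and its adaptation in \cite{SWY}, instead runs through the weak KAM machinery (calibrated curves accumulating on the projected Aubry set, Mather measures, and the identification of the limit as $\inf\{v\in\mathcal{S}^-: v\geqslant \text{a suitable function on the Aubry set}\}$); none of that is replaced by the soft strict-convexity heuristic you invoke. Note also that the shortcut available inside this paper, Proposition \ref{A8}, gives $\liminf_{t\to+\infty}T_t^-\varphi\in\mathcal{S}^-$ for bounded orbits, but that liminf is pointwise and need not be attained uniformly along a single sequence $t_n\to+\infty$, so it does not by itself hand you the element of $\omega(\varphi)\cap\mathcal{S}^-$ that your Stage 4 requires. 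As written, the argument would prove the classical convergence theorem by elementary means, which is a strong signal that Stage 3 cannot be correct in the generality claimed.
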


	\subsection{Proof of Theorem \ref{thm1}}


The proof of the Theorem \ref{thm1} is divided into the following parts.
\begin{lemma} \label{lem2.1}
	Let $u_-\in \mathcal{S}^-$ be a solution of \eqref{1-2}, then  there exists $\delta_0>0$, such that for any $\varphi\in C(M,\R)$ satisfying $\|\varphi-u_-\|_\infty<\delta_0$, there holds 
	$\displaystyle \lim_{t\to +\infty }T_t^- \varphi =u_-.$
\end{lemma}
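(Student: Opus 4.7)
The plan is to invoke the comparison principle for \eqref{1-1} after sandwiching $\varphi$ between explicit exponentially decaying super- and sub-solutions that collapse onto $u_-$. To set things up, observe that at every differentiability point $x$ of $u_-$ one has $H(x,du_-(x),u_-(x))=0$, so by continuity of $H$ the compact set $\Lambda_{u_-}$ is contained in $E$. Assumption (A1) then gives $\partial_u H > 0$ pointwise on $\Lambda_{u_-}$, and compactness upgrades this to a uniform lower bound: there exist $c_0>0$ and an open neighborhood $U$ of $\Lambda_{u_-}$ in $T^*M\times\R$ on which $\partial_u H \geqslant c_0/2$. I then choose $\delta_0\in(0,1)$ small enough that, whenever $(x,p,u_-(x))\in\Lambda_{u_-}$ and $|s|\leqslant\delta_0$, the shifted point $(x,p,u_-(x)+s)$ still lies in $U$.

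Fix $\delta\in(0,\delta_0]$ and define
$$\bar w(x,t) := u_-(x)+\delta e^{-c_0 t/2}, \qquad \underline w(x,t):=u_-(x)-\delta e^{-c_0 t/2}.$$
I claim $\bar w$ is a viscosity super-solution and $\underline w$ a viscosity sub-solution of \eqref{1-1}. For $\bar w$, let a smooth $\phi$ touch $\bar w$ from below at $(x_0,t_0)$. Then $\partial_t\phi(x_0,t_0)=-\tfrac{c_0}{2}\delta e^{-c_0 t_0/2}$ and $p_0:=\partial_x\phi(x_0,t_0)$ belongs to the spatial sub-differential $D^-u_-(x_0)$. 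By the semi-concavity of viscosity solutions of \eqref{1-2}, which is standard under (H1)--(H3), $D^-u_-(x_0)$ can be non-empty only if $u_-$ is differentiable at $x_0$, in which case $p_0=du_-(x_0)$ and $(x_0,p_0,u_-(x_0))\in\Lambda_{u_-}$. The entire segment $\{(x_0,p_0,u_-(x_0)+s):s\in[0,\delta_0]\}$ therefore lies in $U$, and
$$H\bigl(x_0,p_0,u_-(x_0)+\delta e^{-c_0 t_0/2}\bigr)=\int_0^{\delta e^{-c_0 t_0/2}}\partial_u H(x_0,p_0,u_-(x_0)+s)\,ds\geqslant \frac{c_0}{2}\,\delta e^{-c_0 t_0/2},$$
so $\partial_t\phi+H(x_0,\partial_x\phi,\phi)\geqslant 0$. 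The verification for $\underline w$ is symmetric.

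Given any $\varphi\in C(M,\R)$ with $\|\varphi-u_-\|_\infty<\delta_0$, set $\delta:=\|\varphi-u_-\|_\infty$. Then $\underline w(\cdot,0)\leqslant \varphi\leqslant \bar w(\cdot,0)$, and the comparison principle for \eqref{1-1} (available under (H1)--(H3) from \cite{WWY1}) yields
$$u_-(x)-\delta e^{-c_0 t/2}\;\leqslant\; T_t^-\varphi(x)\;\leqslant\; u_-(x)+\delta e^{-c_0 t/2}$$
for all $x\in M$ and $t\geqslant 0$, which gives the desired convergence (in fact at an exponential rate). The main delicate step is the super-/sub-solution verification: it hinges on confining the spatial sub-differentials of $u_-$ to $\Lambda_{u_-}\subset E$, which is precisely the role of semi-concavity and is what allows (A1) to be brought to bear. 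Once that is in place, the rest is a routine comparison argument.
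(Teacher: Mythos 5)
Your strategy is the same as the paper's: sandwich $\varphi$ between the barriers $u_-\pm\delta e^{-c_0t/2}$, use (A1) together with $\Lambda_{u_-}\subset E$ and compactness to get a uniform positive lower bound on $\partial H/\partial u$ near the graph of $du_-$, and conclude by the comparison principle. The supersolution half of your verification is complete and correct: a test function touching $\bar w$ from below produces an element of $D^-u_-(x_0)$, semiconcavity of $u_-$ then forces differentiability at $x_0$, so $(x_0,p_0,u_-(x_0))\in\Lambda_{u_-}$ and your integral estimate applies.

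The gap is the sentence ``the verification for $\underline w$ is symmetric.'' It is not. For the subsolution property you must test with $\phi$ touching $\underline w$ from \emph{above}, which gives $p_0:=\partial_x\phi(x_0,t_0)\in D^+u_-(x_0)$; since $u_-$ is semiconcave, $D^+u_-$ is nonempty at \emph{every} point, including singular points of $u_-$, where $p_0$ is in general a proper convex combination of reachable gradients. Such a point $(x_0,p_0,u_-(x_0))$ need not belong to $\Lambda_{u_-}$ (by strict convexity one typically has $H(x_0,p_0,u_-(x_0))<0$, so it need not even lie in $E$), hence it may fall outside your neighborhood $U$ and the lower bound $\partial_uH\geqslant c_0/2$ along the vertical segment is unavailable; the argument as written fails exactly there. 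The step can be repaired, but it uses (H1) in an essential way: either (i) prove $H\bigl(x_0,p,u_-(x_0)-\delta e^{-c_0t_0/2}\bigr)\leqslant-\tfrac{c_0}{2}\delta e^{-c_0t_0/2}$ first for reachable gradients $p\in D^*u_-(x_0)$, whose graph points do lie in $\Lambda_{u_-}$ (your computation applies verbatim), and then extend to $p_0\in D^+u_-(x_0)=\mathrm{co}\,D^*u_-(x_0)$ using convexity of $H$ in $p$; or (ii) argue as the paper does, checking via Rademacher that $\underline w$ is an almost-everywhere subsolution and invoking the standard fact that, for Hamiltonians convex in $p$, a Lipschitz a.e. subsolution is a viscosity subsolution. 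With either repair, the remaining comparison argument and the exponential rate are exactly as in the paper.
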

\begin{proof}
  By condition (A1), there exist $\kappa_0 >0$  and $\delta_0>0$, such that for  $\theta \in [-1,1]$, 
  $$\frac{\partial H}{\partial u} (x ,\partial_x u_-,u_- - \theta \delta_0) > \kappa_0 >0, \quad 
\forall x \in \mathcal{D}_{u_-}.   $$
 Let $  m = \frac{\kappa_0}{2}$ and
 $\omega_1(x,t):=u_-(x) - \delta_0 e^{-m t} ,  \omega_2(x,t):=u_-(x) + \delta_0 e^{-m t},$
 then for  $\theta \in [0,1] $, 
 \begin{align*}
    \partial_t \omega_1  + H(x, \partial_x \omega_1 ,\omega_1 ) 
     =  & \  \delta_0 m  e^{-m t} + \frac{\partial H}{\partial u} (x,\partial_x u_-,u_- - \theta \delta_0 e^{-m t}) \cdot (- \delta_0 e^{-m t}) <0, \\
       \partial_t \omega_2  + H(x,\partial_x \omega_2 ,\omega_2 )  
     =  & \  - \delta_0 m  e^{-m t} + \frac{\partial H}{\partial u} (x,\partial_x u_-,u_- + \theta \delta_0 e^{-m t} ) \cdot ( \delta_0 e^{-m t}) > 0,
 \end{align*}
 hold for any $ x \in \mathcal{D}_{u_-}$. By the Rademacher's theorem, $\omega_1$ and $\omega_2$ are respectively  almost everywhere subsolution and supersolution of  \eqref{1-1}. Moreover, by (H1) and the semi-concavity of $\omega_2$,  we have $\omega_1$ and $\omega_2$ are respectively the subsolution and supersolution of  \eqref{1-1}. 
Thus, due to $T_t^- \omega_1(x,0), T_t^- \omega_2(x,0)  $ are solutions of  \eqref{1-1}  , applying Proposition \ref{prop:comparison principle},  we have
\begin{align*}
   T_t^- \omega_1(x,0) 
   \geqslant \omega_1(x,t) = u_-(x) - \delta_0 e^{-m t},  \quad
   T_t^- \omega_2(x,0) 
   \leqslant \omega_2(x,t) = u_-(x) + \delta_0 e^{-m t}.
\end{align*}
  For any $\varphi\in  C(M, \mathbb{R}) $  satisfying $ u_-(x) - \delta_0 \leqslant \varphi \leqslant u_-(x) + \delta_0 $, we can get that 
$\omega_1(x,0)  \leqslant  \varphi (x)
\leqslant  \omega_2(x,0).$
It implies that for any $(x,t)\in M \times [0,+\infty)$,
\begin{align*}
	T_t^- \varphi \geqslant     T_t^-\omega_1(x,0) \geqslant u_-(x) -  \delta_0 e^{-m t}, \quad
		T_t^- \varphi \leqslant     T_t^- \omega_2(x,0) \leqslant u_-(x) +  \delta_0 e^{-m t},
\end{align*}
then $\| T_t^-\varphi- u_- \|_\infty < \delta_0 e^{-m t}. $
Thus, we completes the proof.
	\end{proof}

	\begin{lemma} \label{lem2.2}
	Let $u_-\in \mathcal{S}^-$ be a solution of \eqref{1-2}, then for $0 < \delta \leqslant \delta_0$, where $\delta_0$ is in the Lemma \ref{lem2.1}, we have 
		\begin{equation}\label{lem2.2-1}
	\displaystyle \lim_{t\to +\infty} T_t^+ (u_--\delta)=-\infty, \quad \text{uniform} \ x\in M.
		\end{equation} 
	\end{lemma}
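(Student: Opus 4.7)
The plan is to mirror the Lyapunov construction of Lemma \ref{lem2.1}, but now with a profile that drifts \emph{downward}, exploiting the fact that $u_- - \delta$ is a strict subsolution of the stationary equation \eqref{1-2} whenever $\delta\in(0,\delta_0]$ and (A1) holds. The first step is to quantify this strict subsolution: at any differentiability point $x$ of $u_-$, integrating $\partial_u H$ along the segment from $u_-$ to $u_- - \delta$ and using $H(x,\partial_x u_-,u_-)=0$ gives
\[
H\bigl(x,\partial_x u_-,u_- - \delta\bigr) \;=\; -\int_0^\delta \frac{\partial H}{\partial u}\bigl(x,\partial_x u_-,u_- - \theta\bigr)\,d\theta \;\leqslant\; -\kappa_0\,\delta,
\]
with the same $\kappa_0$ used in Lemma \ref{lem2.1}. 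By Rademacher's theorem and the semi-concavity of $u_-$, exactly as in the proof of Lemma \ref{lem2.1}, this promotes to a viscosity subsolution inequality for $u_- - \delta$ with a strict negative gap.

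Next, I would introduce the downward-drifting profile
\[
\omega_3(x,t) \;:=\; u_-(x) \;-\; \delta \;-\; \tfrac{\kappa_0 \delta}{2}\,t,
\]
and verify that it is a viscosity supersolution of the evolutionary equation governing $T_t^+$, by a computation parallel to the one in Lemma \ref{lem2.1}: the extra term $-\tfrac{\kappa_0\delta}{2}$ coming from $\partial_t \omega_3$ combines with the $-\kappa_0\delta$ from the first step to give a strict sign. A comparison principle for $T_t^+$ (the backward-semigroup analog of Proposition \ref{prop:comparison principle}), applied with $\omega_3(\cdot,0) = u_- - \delta = T_0^+(u_- - \delta)$, then yields $T_t^+(u_- - \delta)(x) \leqslant \omega_3(x,t)$ for every $t \geqslant 0$, from which the desired uniform limit $T_t^+(u_- - \delta) \to -\infty$ follows immediately.

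The main obstacle is that (A1) asserts $\partial H/\partial u > 0$ only on the zero set $E = \{H = 0\}$, so the bound $H(x,\partial_x u_-,u_- - s) \leqslant -\kappa_0 s$ used in the first step cannot be extended directly to arbitrarily large $s$. Since $\omega_3(x,t)$ drifts without bound in $t$, a naive verification of the supersolution inequality at the shifted level will fail once $t$ is sufficiently large. I would circumvent this by iterating in finite-time blocks of length $\tau > 0$ chosen so that the profile stays within the neighborhood of $E$ on which (A1) and the continuity of $\partial H/\partial u$ furnish a uniform lower bound; the semigroup property $T_{(n+1)\tau}^+ = T_\tau^+ \circ T_{n\tau}^+$ together with the monotonicity of $T_t^+$ then propagate the decay step by step to all of $[0,+\infty)$, producing an upper bound for $T_t^+(u_- - \delta)$ that tends to $-\infty$ uniformly in $x\in M$.
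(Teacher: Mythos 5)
Your first step and your first ``block'' are fine, but the place where you yourself locate the difficulty is exactly where the argument breaks, and the proposed fix does not repair it. The quantified inequality $\partial_u H(x,\partial_x u_-,u_--\theta)\geqslant\kappa_0$ (hence $H(x,\partial_x u_-,u_--s)\leqslant-\kappa_0 s$) is available only for depths $s\in[0,\delta_0]$, because it comes from (A1) plus continuity and compactness of $\Lambda_{u_-}$; condition (A1) constrains $\partial_u H$ only on the zero level set $E$, and the points $(x,\partial_x u_-(x),u_-(x)-s)$ with $s$ large are far from $E$, where neither a sign nor a lower bound on $-H$ is available (only the two-sided bound (H3)). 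Consequently the drifting profile $\omega_3$ is a supersolution only until it exits the $\delta_0$-strip, i.e.\ up to a fixed finite time. Your block iteration cannot restart after that: monotonicity gives $T^+_{(n+1)\tau}(u_--\delta)\leqslant T^+_\tau\bigl(u_--\delta-nc\bigr)$, but to extract a further decrement you would need the strict-subsolution estimate at depth $nc>\delta_0$, which (A1) does not furnish, and there is no commutation $T^+_\tau(\psi-C)=T^+_\tau\psi-C$ for $u$-dependent Hamiltonians that would let you translate the first-block estimate downward. The phrase ``chosen so that the profile stays within the neighborhood of $E$'' is at odds with the conclusion you want to prove: divergence to $-\infty$ forces the functions to leave every such neighborhood. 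Note that $t\mapsto T_t^+(u_--\delta)$ is nonincreasing (Propositions \ref{A1} and \ref{A5}), so the whole content of the lemma is to exclude convergence to a finite limit below the strip; your construction gives no mechanism for that. (Under the stronger hypothesis (A2), where $\partial_u H\geqslant0$ everywhere, the gap $-\kappa_0\delta$ does persist at all depths and a global barrier of your type would work; but the lemma is asserted under (A1).)

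The paper's proof is of a different, ``soft'' nature and avoids any estimate below the strip: it assumes for contradiction that $T_t^+(u_--\sigma)$ is bounded below, so that it converges to some $u_1^+\in\mathcal S^+$, sets $u_1^-=\lim_{t\to+\infty}T_t^-u_1^+\in\mathcal S^-$, and shows via Lemma \ref{lem2.1} and the semigroup inequalities (Propositions \ref{A5}, \ref{A6}) that $u_1^-\equiv u_-$; it then derives a contradiction from the pair of facts that $\{u_-=u_1^+\}=\emptyset$ (since $u_1^+\leqslant u_--\sigma<u_-$) while $\{u_1^-=u_1^+\}\neq\emptyset$ (a contact-locus argument using $T^+_t\circ T^-_t\leqslant\mathrm{id}$ and strict monotonicity). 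Finally, uniform divergence is not obtained from a barrier at all but from Proposition \ref{A7}, which upgrades ``unbounded from below'' to the uniform limit \eqref{lem2.2-1}. To make your route rigorous you would either need to strengthen (A1) to (A2) (or to a uniform bound on $\partial_u H$ below $\Lambda_{u_-}$), or replace the iteration by an argument of the paper's type.
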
 
	\begin{proof} Let   $u_1^{\delta} := u_- - \delta $, We claim:  \begin{equation}\label{lem2.2-2}
	\text{
		For any $0 < \delta \leqslant \delta_0$,  $T_t^+u_1^\delta (x)$ is unbounded from below on $M\times (0,+\infty)$.
		}		
	\end{equation} 
	We argue by contradiction and assume that there exists $ \sigma \leqslant \delta_0$ such that $\displaystyle \lim_{t\to +\infty} T_t^+  u_1^{\sigma}>-\infty$.
 Then, set $\displaystyle \lim_{t\to +\infty} T_t^+  u_1^{\sigma} := u_1^+ \in \mathcal{S}^+$, 
	and set
 \begin{equation}\label{020}
     u_1^-= \displaystyle \lim_{t\to +\infty} T_t^-  u_1^{+}\in \mathcal{S}^-.
 \end{equation}
	By condition (A1) and $H(x, \partial_x u_1^-, u_1^- )=0$, using Lemma \ref{lem2.1} we get that there exists  $\delta_1$ such that for any $0<\delta\leqslant \delta_1$, 
  \begin{equation}  \label{01}
      \lim_{t\to +\infty} T_t^- {(u_1^- + \delta )} = u_1^-.
  \end{equation}
	Due to $\displaystyle \lim_{t\to +\infty} T_t^+  u_1^{\sigma} = u_1^+$ , there exists $T_0$, such that $$
	  \|T_t^+  u_1^{\sigma}-   u_1^+ \|_\infty< \delta_1, \quad \forall t\geqslant T_0.
	    $$
	  As $ T_t^+  u_1^{\sigma} \geqslant T_t^+ u_1^+=  u_1^+$ ,  we get  
         $u_1^+	 \leqslant   T_t^+  u_1^{\sigma}  \leqslant u_1^+ +\delta_1   \leqslant   u_1^-+\delta_1$ for any $t\geqslant T_0$.
Thus, 
\begin{equation} \label{001}
  \lim_{ s \to  +\infty } T_s^-  u_1^+	 \leqslant    \lim_{ s \to  +\infty } T_s^-  T_t^+  u_1^{\sigma}  \leqslant   \lim_{ s \to  +\infty } T_s^- (u_1^+ +\delta_1)   \leqslant    \lim_{ s \to  +\infty } T_s^- ( u_1^-+\delta_1 ).
\end{equation}
Combined with \eqref{020} , \eqref{01} and \eqref{001}, we get
     \begin{equation} \label{023}
         \lim_{ s \to  +\infty } T_s^-  T_t^+  u_1^{\sigma}  = u_1^-.
     \end{equation}
	     
     By the way, combined with Lemma \ref{lem2.1}  and Proposition \ref{A6}, we have
	     $$
	 u_-  \geqslant  \lim_{ s \to  +\infty } T_s^- T_t^+  u_-  \geqslant     \lim_{ s \to  +\infty } T_s^- T_t^+  u_1^{\sigma}  \geqslant   \lim_{ s \to  +\infty }  T_{s-t}^- \circ  T_t^- T_t^+ u_1^{\sigma}  \geqslant      \lim_{ s \to  +\infty } T_{s-t}^- u_1^{\sigma} =u_-.
	     $$
	     This implies that
      \begin{equation} \label{024}
          \lim_{ s \to  +\infty } T_s^- T_t^+  u_1^{\sigma}  = u_-.
      \end{equation}
      
	In view of \eqref{023} and \eqref{024}, we get that
 $u_- \equiv u_1^-$. 
 
 \medskip
Next,  we  show a contradiction by proving that
 \begin{equation}\label{eq:neq-empty1}
\{x\in M: u_-(x)=u^+_1(x) \} = \emptyset, \quad    \{x\in M: u^-_1(x)=u^+_1(x) \} \neq \emptyset.
 \end{equation}

On one hand, due to $u_1^\delta$, $\delta \in [0,\delta_0] $ is a subsolution  and  Proposition \ref{A1}, we have
 $$
  T_t^+ u_1^\delta \leqslant u_1^\delta =u_--\delta <u_-, \quad \forall t>0,
 $$
 which implies that
 $$
 u_1^+= \lim_{t\to +\infty}   T_t^+ u_1^\delta \leqslant u_--\delta <u_-.
 $$ 
 Thus, $\{x\in M: u_-(x)=u^+_1(x) \} = \emptyset$.

On the other hand, we show that 
 \begin{equation}\label{eq:neq-empty}
 \{x\in M: u^-_1(x)=u^+_1(x) \} \neq \emptyset.
 \end{equation}
If $u^-_1>u^+_1 $ , then set $\eta:=\displaystyle \min_{x\in M} \{u_1^-(x)- u_1^+(x) \}   $.
   According to \eqref{020} , there is $t_0>0$, such that for $t\geqslant t_0$, 
  $$
   T_t^- u_1^+ \geqslant u_1^--\frac{\eta}{2} > u_1^+.
  $$
  Notice that for any $x\in M, t>t_0$, applying Proposition \ref{A6} and Proposition \ref{A5}, one gets
   $
   u_1^+ \geqslant T_t^+ \circ   T_t^- u_1^+ > T_t^+u_1^+. 
   $
    This contradicts with the fact that $u_1^+\in \mathcal{S}^+$, which is a fixed point of $\{T_t^+\}_{t\geqslant 0} $. 
    Thus, we follows the claim \eqref{lem2.2-2}.  Finally, applying Proposition \ref{A7}, one concludes the result \eqref{lem2.2-1}. 
	\end{proof} 
	\begin{lemma}  \label{lem2.3}
		Let $u_-\in \mathcal{S}^-$ be a solution of $ H(x,\partial_x u, u)=0$,  then for any $\varphi\leqslant u_-$ ,
		$$
		\lim_{t\to +\infty} T_t^-\varphi= u_-.
		$$
	\end{lemma}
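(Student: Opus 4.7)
The plan is to combine the local basin-of-attraction statement of Lemma~\ref{lem2.1} with the downward ``blow-down'' of Lemma~\ref{lem2.2}, using the latter to transport an arbitrary $\varphi\leqslant u_-$ into a neighborhood of $u_-$ after a finite time, and then letting Lemma~\ref{lem2.1} finish the job.

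First I would record the easy upper bound. Since $u_-\in\mathcal{S}^-$ is a fixed point of $\{T_t^-\}$ and the semigroup is order preserving (Proposition~\ref{A5}), the assumption $\varphi\leqslant u_-$ immediately gives
$$T_t^-\varphi \;\leqslant\; T_t^- u_- \;=\; u_-,\qquad t\geqslant 0.$$

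Next I would produce a matching lower bound. Fix $\delta\in(0,\delta_0)$, where $\delta_0$ is the constant from Lemma~\ref{lem2.1}. By Lemma~\ref{lem2.2}, $T_t^+(u_--\delta)\to -\infty$ uniformly on $M$, so there exists $T>0$ with
$$T_T^+(u_--\delta)(x)\;\leqslant\;\min_{y\in M}\varphi(y)\;\leqslant\;\varphi(x),\qquad x\in M.$$
Applying the monotone operator $T_T^-$ to both sides, and then using the one-sided inversion $T_T^-\circ T_T^+\geqslant \mathrm{id}$ that is exactly the inequality used in the chain displayed in the proof of Lemma~\ref{lem2.2} (via Propositions~\ref{A5} and \ref{A6}), I would obtain
$$T_T^-\varphi\;\geqslant\;T_T^-\bigl(T_T^+(u_--\delta)\bigr)\;\geqslant\;u_--\delta.$$
Combined with the upper bound this yields $\|T_T^-\varphi-u_-\|_\infty\leqslant\delta<\delta_0$.

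Finally, I would apply Lemma~\ref{lem2.1} with initial datum $T_T^-\varphi\in C(M,\mathbb{R})$: it says $T_t^-(T_T^-\varphi)\to u_-$ as $t\to+\infty$. The semigroup property $T_t^-\circ T_T^- = T_{t+T}^-$ then converts this into $\lim_{s\to+\infty} T_s^-\varphi = u_-$, which is the desired conclusion.

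The main obstacle is verifying that the one-sided inversion $T_T^-\circ T_T^+\geqslant \mathrm{id}$ is actually available in the form I need: in Lemma~\ref{lem2.2} it is invoked on the specific initial datum $u_1^\sigma=u_--\sigma$, and I will need it on the continuous function $u_--\delta$. Since the relation is a general property of the semigroup pair $(T_t^-,T_t^+)$ coming from Propositions~\ref{A5}--\ref{A6}, and not from any special structure of $u_1^\sigma$, this should be routine; should those propositions be stated only for viscosity sub/supersolutions, a one-line density argument via Lemma~\ref{lem2.1}'s type of construction would close the gap.
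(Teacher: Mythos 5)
Your proof is correct and follows essentially the same route as the paper's: both use Lemma~\ref{lem2.2} to push $T_{T}^{+}(u_--\delta)$ below $\varphi$ after a finite time, then combine monotonicity (Proposition~\ref{A5}), the one-sided inversion $T_T^-\circ T_T^+\geqslant \mathrm{id}$ (Proposition~\ref{A6}), and Lemma~\ref{lem2.1} to conclude. Your worry about Proposition~\ref{A6} is unnecessary, since it is stated for arbitrary $\varphi\in C(M,\mathbb{R})$; the only cosmetic difference is that you land $T_T^-\varphi$ in the $\delta_0$-neighborhood and restart via the semigroup property, while the paper sandwiches the limit directly.
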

	\begin{proof}
		For any given $\varphi\in C(M,\R)$ satisfying  $\varphi\leqslant u_-$,  by Lemma \ref{lem2.2} and Proposition \ref{A7}, there exist $T_1>0$ and $0< \delta \leqslant \delta_0$, where $\delta_0$ is in the Lemma \ref{lem2.1},  such that 
		$$
	\varphi\geqslant  T_{T_1}^+ (u_--\delta).
		$$
		Combined with  Lemma \ref{lem2.1} and  Proposition \ref{A6},  we get
\begin{align*}
    	 u_- & {= \lim_{t\to +\infty}T_t^- u_-} \geqslant\lim_{t\to +\infty}T_t^- \varphi\geqslant 	\lim_{t\to +\infty}T_t^- \circ  T_{T_1}^+ (u_--\delta) \\
     &= 	\lim_{t\to +\infty}T_{t-T_1}^- \circ T^-_{T_1}  \circ T^+_{T_1}  (u_--\delta)\geqslant 	\lim_{t\to +\infty}T_{t-T_1}^-   (u_--\delta)=u_-.  \qedhere
\end{align*} 
	\end{proof}
	
		\begin{lemma} \label{lem2.5}
		 The solution of $ H(x,\partial_x u, u)=0$ is unique. 
	\end{lemma}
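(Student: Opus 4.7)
The plan is to reduce the uniqueness assertion to Lemma \ref{lem2.3} by using the pointwise minimum of two candidate solutions as the common subsolution data.

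Suppose for contradiction that $u_-,v_- \in \mathcal{S}^-$ are both solutions of $H(x,\partial_x u, u)=0$. Define $\varphi := \min\{u_-, v_-\} \in C(M,\R)$. By construction $\varphi \leqslant u_-$ and $\varphi \leqslant v_-$ pointwise on $M$. Since Lemma \ref{lem2.3} applies to any element of $\mathcal{S}^-$, I would apply it first with the reference solution taken to be $u_-$: from $\varphi \leqslant u_-$ one obtains
\[
\lim_{t\to +\infty} T_t^- \varphi = u_-.
\]
Then I would apply the same lemma again, but now with the reference solution taken to be $v_-$: from $\varphi \leqslant v_-$ one obtains
\[
\lim_{t\to +\infty} T_t^- \varphi = v_-.
\]

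Uniqueness of limits in $C(M,\R)$ forces $u_- \equiv v_-$, contradicting the hypothesis that they are distinct, and establishing the lemma.

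There is essentially no obstacle here: all the real work has been done in Lemmas \ref{lem2.1}--\ref{lem2.3}, where condition (A1) was used to produce the contraction near $u_-$ and the backward divergence of $T_t^+(u_-{-}\delta)$. The only small point to check is that $\varphi = \min\{u_-,v_-\}$ is indeed continuous (which is immediate, as $u_-,v_-$ are Lipschitz on the compact manifold $M$) so that $T_t^-\varphi$ is defined and Lemma \ref{lem2.3} is applicable.
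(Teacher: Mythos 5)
Your proof is correct and coincides with the paper's own argument: the paper likewise picks $\varphi \leqslant \min\{u_1,u_2\}$ and applies Lemma \ref{lem2.3} twice to get two different limits of $T_t^-\varphi$, a contradiction. Your only cosmetic difference is choosing $\varphi$ to be exactly the pointwise minimum (and noting its continuity), which is a perfectly valid instance of the same reduction.
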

	\begin{proof}
Assume that there exist $u_1,u_2\in \mathcal{S}^-$ with {$\|u_1-u_2\|_{\infty}>0$}. Taking $\varphi \leqslant \min\{ u_1,u_2\} $, by Lemma \ref{lem2.3}, we have  
		$$
		\lim_{t\to +\infty} T_t^- \varphi=u_1,\quad 	\lim_{t\to +\infty} T_t^- \varphi=u_2,
		$$
		which    makes a contradiction.
	\end{proof}
	\medskip
		
\noindent{\textbf{Proof of Theorem \ref{thm1}:}} 
First, by Lemma \ref{lem2.1}, Lemma \ref{lem2.3} and Lemma \ref{lem2.5}, $u_-$ is the only viscosity solution to $ H(x,\partial_x u, u)=0$ , and there exists $\delta_0$ such that for any $\varphi\in C(M,\R)$ satisfying  $\varphi \leqslant u_-+\delta_0 $, 
\begin{equation}\label{eq:pf2-1-1}
\lim_{t\to +\infty}T_t^- \varphi=u_-.
\end{equation}
	 Next we focus on the case for any $\varphi\in C(M,\R)$. Obviously,  we only need to consider the following three cases:
\begin{itemize}
	\item[\textbf{(1)}]  $T_t^- \varphi(x)$ is {bounded} on $M\times\R^+$.
	\end{itemize}
	By Proposition \ref{A8}, $\displaystyle  \liminf_{t\to +\infty} T_t^- \varphi \in \mathcal{S}^- $ and $ \Psi(x):= \limsup \limits_{t\to +\infty} T_t^- \varphi $ is a subsolution.
Due to Lemma \ref{lem2.5},  $ \mathcal{S}^-$ has only one element, then $\displaystyle \liminf_{t\to +\infty} T_t^- \varphi=u_- $. By Proposition \ref{A1}  we can get that  $T_t^+ \Psi(x)\leqslant \Psi(x) $ and   $\Psi(x)\geqslant u_-(x) $. Denote 
$$ 
u_+:=\lim_{t\to +\infty} T_t^+ \Psi(x)\in \mathcal{S}^+. 
$$
Since $\displaystyle \lim_{t\to +\infty}   T_t^- u_+\in \mathcal{S^-}$, then $\displaystyle \lim_{t\to +\infty}   T_t^- u_+=u_-$ and $u_+ \leqslant u_- $. Hence, there exists $T_0$ such that $T_{T_0}^+ \Psi(x) \leqslant u_-+\delta_0.$ Then on the one hand,
\begin{equation} \label{eq:pf-thm21-1}
\lim_{t\to +\infty}T_t^-\circ T_{T_0}^+ \Psi(x)\leqslant \lim_{t\to +\infty}T_t^-(u_-+\delta_0) =u_-(x).
\end{equation}
On the other hand, by  Proposition \ref{A6} and \ref{A1}, we have
\begin{equation}\label{eq:pf-thm21-2}
\lim_{t\to +\infty}T_t^-\circ T_{T_0}^+ \Psi(x)   \geqslant \lim_{t\to +\infty}T_{t-T_0}^-\circ T_{T_0}^- \circ T_{T_0}^+ \Psi(x) \geqslant \lim_{t\to +\infty}{T^-_{t-T_0}}\Psi(x) \geqslant \Psi(x)   \geqslant u_-(x).
\end{equation}
Combining with \eqref{eq:pf-thm21-1} and \eqref{eq:pf-thm21-2},  $\Psi(x)=u_-(x)$. Then the conclusion is true in this case.
\medskip
 \begin{itemize}
\item[\textbf{(2)}] If $T_t^- \varphi(x)$ is unbounded   from below on $M\times \R$, this case will not happen.
\end{itemize}

  It is clear that there exists $c=\| \varphi- u_- \|_\infty \in \R^+$, such that $  \varphi \geqslant u_-- c $.  Then by Proposition \ref{A5} and applying Lemma \ref{lem2.3}, we have  
 $$
  T^-_t \varphi\geqslant   T^-_t (u_--c) \quad \text{and} \quad \lim_{t\to +\infty}  T^-_t (u_--c)=u_-.
 $$
 Thus $T_t^- \varphi(x)$ is bounded from below on $M\times \R$. This contradicts the assumption. 

 \begin{itemize}
\item[\textbf{(3)}] If $T_t^- \varphi(x)$ is unbounded  from above on $M\times \R$, this case does not happen either.  
\end{itemize}
Due to Proposition \ref{A7}, for fixed $c \geqslant 2 || u_+ - \varphi||_\infty$, there exists $t_c>0$ such that
$$
T_{t_c}^- \varphi  \geqslant \varphi +c  > u_+,
$$
where  $u_+:=\lim \limits_{t\to +\infty} T_t^+ u_-\in \mathcal{S}^+ $.
Then,  by  Proposition \ref{A5} and \ref{A6} ,  we have
$$
\varphi \geqslant T_{t_c}^+\circ T_{t_c}^- \varphi  \geqslant T_{t_c}^+ (\varphi +c)> T_{t_c}^+ u_+= u_+.
 $$
 It implies that for any $s\in [0,t_c]$,
$$
 u_+ = T^+_{ t_c+s} u_+ \leqslant  T_{nt_c+s}^+ \varphi(x) \leqslant \max_{s\in [0,t_c]} T_s^+ \varphi(x), \quad  \forall n \in \mathbb{Z}^+.
$$
Thus, $\{ T_{t}^+ \varphi(x) \}$ is bounded  on $M \times \R^+$. Hence, by Proposition \ref{A8},  there exists
$$
 \limsup_{t\to +\infty} T^+_t \varphi(x)\in \mathcal{S}^+.
$$
  By Lemma \ref{lem2.5} and  Proposition \ref{A1}, it implies that  
  $$
  \displaystyle u_-=  \lim_{s\to +\infty } T_s^- \Big( \limsup_{t\to +\infty} T^+_t \varphi \Big)\geqslant  \limsup_{t\to +\infty} T^+_t \varphi,$$
   and thus there exists $T_1$ such that $T^+_{T_1} \varphi \leqslant u_-+\delta_0 .$
Then  on the one  hand, \eqref{eq:pf2-1-1} implies 
\begin{equation}\label{eq:2-1-pf-1}
\limsup_{t\to +\infty}T_t^-\circ T_{T_1}^+ \varphi \leqslant \lim_{t\to +\infty}T_t^-(u_-+\delta_0)=u_-.
\end{equation}
On the other  hand, for any $n\in \mathbb{N}$, by Proposition \ref{A6},
\begin{equation}\label{eq:2-1-pf-2}
	\lim_{n\to +\infty}T_{nt_c+T_1}^-\circ T_{T_1}^+ \varphi (x) \geqslant \lim_{n\to +\infty}T_{nt_c}^-\varphi (x) \geqslant \varphi (x)+ c   > u_+(x) + c.
\end{equation}
In view of \eqref{eq:2-1-pf-1} and \eqref{eq:2-1-pf-2}, we get $u_- >u_++c $. However, it is quite similar with \eqref{eq:neq-empty}  to show   that     $\{ x\in M: u_-(x)= u_+(x) \} \neq \emptyset$.
This makes  a contradiction, which completes the proof. \qed 
  \medskip
	
	\section{Dynamic of globally minimizing orbits}
	The authors of \cite{WWY}  provide the implicit variational principle for contact Hamiltonian systems and introduce the notion of implicit action functions $h_{x_0,u_0}(x,t),h^{x_0,u_0}(x,t)$. The basic properties of implicit action functions see \cite{WWY,WWY1}.
	
		\begin{definition}[\cite{WWY2}, {Definition 3.1}] \label{def:1}
		A curve $(x(\cdot),u(\cdot)):\mathbf{R}\to M\times\mathbf{R}$ is called \textbf{globally minimizing}, if it is locally Lipschitz continuous and
		for each $t_1$, $t_2\in\mathbf{R}$ with $t_1< t_2$, there holds
		\begin{align}\label{3-1}
			u(t_2)=h_{x(t_1),u(t_1)}(x(t_2),t_2-t_1).
		\end{align}
		Moreover, a curve $(x(\cdot),u(\cdot)):\mathbf{R}^+ \to M\times\mathbf{R}$ is called \textbf{positive globally  minimizing}, if it is locally Lipschitz and \eqref{3-1} holds
		for each $t_1$, $t_2\in\mathbf{R}^+$ with $t_1< t_2$. 
	   \end{definition}
		\begin{definition}\label{def:2}
		A curve $(x(\cdot),u(\cdot)):\mathbf{R}\to M\times\mathbf{R}$ is called \textbf{semi-static}, if it is globally minimizing and for each $t_1$, $t_2\in\mathbf{R}$ with $t_1\leqslant t_2$, there holds
		\begin{equation}\label{3-3}
			u(t_2)=\inf_{s>0}h_{x(t_1),u(t_1)}(x(t_2),s).
		\end{equation}	

	\end{definition}
	If  the curve $(x(\cdot),u(\cdot)):\mathbf{R}\to M\times\mathbf{R}$ is positive globally  minimizing, then by \cite[Proposition 3.1]{WWY2}, { we know that $(x(t),p(t),u(t))$} is an orbit of $\Phi^H_t$, where 
	$$
	p(t)=\frac{\partial L}{\partial \dot{x}}(x(t),\dot{x}(t),u(t)).
	$$
	 We call it  a positive globally  minimizing  orbit of $\Phi^H_t$.	 Semi-static orbit can be similarly defined.

	\begin{definition} \label{audeine}
		We call the set of all semi-static orbits  the\textbf{  Ma\~n\'e set} of $H$, denoted by $\tilde{\mathcal{N}}$. 
	\end{definition}
In fact, the authors show in \cite[Theorem1]{WWY5} that the Ma\~n\'e set can be  classified according to the set of solution $\mathcal{S}^- $ as follows.
\begin{equation}\label{eq:Mane}
\tilde{\mathcal{N}}= \bigcup\limits_{ u_-\in \mathcal{S}^-} \tilde{\mathcal{N}}_{u_-}, \ {\footnotesize \tilde{\mathcal{N}}_{u_-}=\{ (x,p,u)\in T^*M\times \R : u=u_-(x)=u_+(x), p= d_xu_{\pm}(x)  \},}
\end{equation}
where $u_+=\lim\limits_{t\to +\infty} T_t^+ u_-$.

		\subsection{Proof of Theorem \ref{thm2}}
	
	We divide the proof of Theorem \ref{thm2} into the following two steps.
	
	\begin{itemize}
		\item [ ]\textbf{Step1:} \quad Boundedness of globally minimizing orbits
	\end{itemize}
\begin{proposition}{\cite[Lemma 2.1]{WWY1}} \label{prop3.1}
    For any given $a,b,\delta,T \in \mathbb{R}$ with $a<b,0<\delta<T$, there exists a compact set $\mathcal{K} := \mathcal{K}_{a,b,\delta,T} \subset T^*M \times \mathbb{R}$ such that for any $(x_0,u_0,x,t) \in M \times [a,b] \times M \times [\delta,T]$ and any minimizer $\gamma(s)$ of $h_{x_0,u_0}(x,t)$, we have
    \begin{equation*}
        (\gamma(s),p(s),u(s)) \subset \mathcal{K}, \quad \forall s \in[0,t],
    \end{equation*}
    where $u(s) = h_{x_0,u_0}(\gamma(s),s)$, $p(s)=\frac{\partial L}{\partial v} (\gamma(s),\dot{\gamma}(s),u(s))$ and $\mathcal{K}$ depends only on $a,b,\delta$, and $T$.
\end{proposition}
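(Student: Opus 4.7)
The plan is to bound in turn $u(s)$, then $|\dot\gamma(s)|$ and $|p(s)|$, uniformly on $s \in [0,t]$, with all constants depending only on $a,b,\delta,T$. As a preliminary, I would produce an upper bound on $u(t)=h_{x_0,u_0}(x,t)$ by testing with a competitor: fix a constant-speed geodesic $\alpha:[0,t]\to M$ joining $x_0$ to $x$, for which $|\dot\alpha|\leqslant \operatorname{diam}(M)/\delta$, and let $\tilde u$ solve the associated ODE $\dot{\tilde u}=L(\alpha,\dot\alpha,\tilde u)$ with $\tilde u(0)=u_0$. Since $L$ is $\kappa$-Lipschitz in $u$ by (H3) and $(\alpha,\dot\alpha)$ is confined to a fixed compact set, Gronwall's inequality gives $|\tilde u(s)|\leqslant C_1$ on $[0,t]$, so $u(t)\leqslant \tilde u(t)\leqslant C_1$ by the variational definition of $h_{x_0,u_0}$.

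Next, to get a two-sided bound on $u$ along the minimizer itself, I would combine (H3) with superlinearity of $L$ on the compact manifold $M$ at $u=0$: this yields $L(x,v,u)\geqslant -C_0-\kappa|u|$, so $\dot u(s)\geqslant -C_0-\kappa|u(s)|$ along $\gamma$, and Gronwall together with the upper bound on $u(t)$ produces $|u(s)|\leqslant C_2$ uniformly on $[0,t]$.

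The velocity bound is the heart of the argument. From the previous steps the action $\int_0^t L(\gamma,\dot\gamma,u)\,ds = u(t)-u_0$ is controlled in $[-C_2-b,\,C_1-a]$, and restricting $L$ to the compact slab $\{|u|\leqslant C_2\}$ preserves uniform superlinearity; consequently there is a superlinear function $\theta$ with $\int_0^t \theta(|\dot\gamma|)\,ds\leqslant C_3$. By the mean value theorem and $t\geqslant\delta$, there exists $s_0\in[0,t]$ with $|\dot\gamma(s_0)|\leqslant C_4$, and the Legendre transform gives $|p(s_0)|\leqslant C_5$. To upgrade this single-time estimate to a pointwise bound on all of $[0,t]$, I would exploit the flow \eqref{eq:ode1}: the energy $E(s):=H(\gamma(s),p(s),u(s))$ obeys $\dot E = -\frac{\partial H}{\partial u}\,E$, hence $|E(s)|\leqslant |E(s_0)|\,e^{\kappa|s-s_0|}\leqslant C_6$ on $[0,t]$ by (H3). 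Superlinearity of $H$ in $p$ on the bounded range of $(x,u)$ then yields $|p(s)|\leqslant C_7$, and the equation $\dot\gamma=\partial H/\partial p$ converts this to a uniform velocity bound.

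Putting everything together, $(\gamma(s),p(s),u(s))$ is confined to a compact subset $\mathcal{K}=\mathcal{K}_{a,b,\delta,T}\subset T^*M\times\R$ depending only on the stated parameters. The main obstacle is the passage from the integral velocity estimate $\int_0^t\theta(|\dot\gamma|)\,ds\leqslant C_3$ to a pointwise bound: the classical Tonelli argument invokes conservation of $H$ along extremals, which is not available in the contact setting because $H$ is no longer a first integral. Hypothesis (H3) supplies the correct replacement, namely the exponential Gronwall bound for $H$ along contact orbits, which is the key new ingredient compared with the Tonelli case.
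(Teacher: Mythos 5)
Note first that the paper does not prove this statement at all: Proposition \ref{prop3.1} is quoted verbatim from \cite[Lemma 2.1]{WWY1}, so there is no in-paper argument to compare against. Your reconstruction is essentially the standard a priori compactness proof for contact systems and its architecture is sound: bound $u(t)=h_{x_0,u_0}(x,t)$ by a competitor curve plus Gronwall, get a two-sided bound on $u(s)$ along the minimizer from $\dot u=L(\gamma,\dot\gamma,u)$ and $L\geqslant -C_0-\kappa|u|$, convert the bounded action into an integral superlinearity estimate, find one time $s_0$ with $|\dot\gamma(s_0)|$ (hence $|p(s_0)|$) controlled, and propagate via $\frac{d}{ds}H=-\frac{\partial H}{\partial u}H$ together with (H3), which is indeed the correct contact replacement for conservation of energy. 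Three points should be made explicit rather than implicit. First, the comparison $h_{x_0,u_0}(x,t)\leqslant\tilde u(t)$ for the geodesic competitor is not the ``variational definition'' but a property of the implicit action function (the representation of $h_{x_0,u_0}$ as an infimum of solutions of the Carath\'eodory equation along curves), which must be cited from the implicit variational principle of \cite{WWY,WWY1}. Second, the energy identity $\dot E=-\frac{\partial H}{\partial u}E$ is only available because minimizers of $h_{x_0,u_0}(x,t)$ are genuine $C^1$ extremals generating orbits of \eqref{eq:ode1}; this regularity/characterization again comes from \cite{WWY,WWY2} and should be invoked before differentiating $H$ along the curve. Third, the Gronwall step for the intermediate bound on $u(s)$ needs the correct orientation: the inequality $\dot u\geqslant -C_0-\kappa|u|$ only limits the rate of decrease, so the upper bound on $u(s)$ for $s<t$ must be propagated backwards from $u(t)\leqslant C_1$ (a large intermediate peak could not decay back below $C_1$ by time $t$), while the lower bound propagates forward from $u(0)=u_0\in[a,b]$; as written, ``Gronwall together with the upper bound on $u(t)$'' hides this two-directional argument. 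With these details filled in, the proof is complete and all constants depend only on $a,b,\delta,T$, as required.
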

 
		\begin{lemma}\label{lem-bound-minimizer}
 		For each positively global minimizer
 		$$
 		 (x(t),p(t),u(t)) ,   t\geqslant 0,
 		 $$
 		it is bounded on $[0,+\infty)$.
\end{lemma}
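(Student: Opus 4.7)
The plan is to reduce the boundedness of the triple $(x(t), p(t), u(t))$ to a uniform bound on $u(t)$. Since $M$ is compact, the $x$-component is automatically bounded, so the first observation I would record is that, once $u(t) \in [a, b]$ for every $t \geqslant 0$, Proposition~\ref{prop3.1} applied to each unit interval $[n, n+1]$ (on which the orbit piece minimizes $h_{x(n), u(n)}(x(n+1), 1)$ with $(x(n), u(n)) \in M \times [a, b]$) will give $(x(s), p(s), u(s)) \in \mathcal{K}_{a, b, 1/2, 1}$ for all $s \geqslant 0$, the compact set being independent of $n$. Hence it suffices to control $u(t)$.

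Next, I would exploit the positively globally minimizing identity $u(t) = h_{x(0), u(0)}(x(t), t)$, valid for every $t > 0$. Applying Proposition~\ref{prop3.1} once with $a = u(0)-1$, $b = u(0)+1$, $\delta = 1/2$ and $T = 1$ handles $u$ on $[0, 1]$ and shows that $\psi(y) := h_{x(0), u(0)}(y, 1)$ is a bounded continuous function on $M$. For $t > 1$, I plan to invoke the dynamic programming identity for the implicit action function --- obtained by splitting an admissible curve from $x(0)$ to $y$ at time $1$ and using the monotonicity of $h_{z, v}$ in $v$ --- to rewrite
$$
h_{x(0), u(0)}(y, t) \;=\; \inf_{z \in M} h_{z, \psi(z)}(y, t-1) \;=\; T^-_{t - 1}\psi(y), \qquad y \in M.
$$
Evaluating at $y = x(t)$ yields $u(t) = T^-_{t-1}\psi(x(t))$ for every $t \geqslant 1$, reducing the problem to a long-time control of $T^-_t \psi$ at an arbitrary (moving) point of $M$.

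By Theorem~\ref{thm1} (under (A1) and (B)) or Proposition~\ref{prop:SWY} (under (A2) and (B)), the family $\{T^-_s \psi\}_{s \geqslant 0}$ converges uniformly on $M$ to $u_- \in \mathcal{S}^-$, and is therefore uniformly bounded on $M \times [0, +\infty)$. Combining this with the identity of the previous paragraph will give uniform boundedness of $u(t)$ on $[1, +\infty)$, and hence, together with the bound on $[0, 1]$, on all of $[0, +\infty)$. The first paragraph then promotes this to boundedness of the full triple.

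The main obstacle to watch for is that one cannot naively iterate the finite-time estimate of Proposition~\ref{prop3.1}, since the compact set there depends on the time horizon $T$. The semigroup identity above is precisely what circumvents this: after one unit of time the ``singular'' initial datum $(x(0), u(0))$ has been smoothed into a bounded continuous function $\psi$ on $M$, at which point the long-time theory of the Lax--Oleinik-type operator $T^-_t$ delivers the uniform-in-$t$ bound with no further input.
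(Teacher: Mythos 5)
Your proposal is correct and follows essentially the same route as the paper: both bound $u(t)$ via the identity $h_{x_0,u_0}(\cdot,t)=T^-_{t-1}h_{x_0,u_0}(\cdot,1)$ together with the large-time convergence of $T^-_t$ from Theorem \ref{thm1} (or Proposition \ref{prop:SWY}), and then deduce the bound on $p(t)$ from Proposition \ref{prop3.1} applied to finite-time windows along the orbit. The only differences are cosmetic: you use unit windows $[n,n+1]$ where the paper uses $[t-1,t+1]$, and you bound $u(t)=T^-_{t-1}\psi(x(t))$ directly by the uniform bound on $T^-_s\psi$, whereas the paper routes this through an equi-Lipschitz estimate.
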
 
\begin{proof}
The boundedness of $x(t)$ is obvious. Let us focus on the boundedness of $u(t)$ and $p(t)$ in turn.
Actually, for the case of $t\geqslant s \geqslant 0 $,  from the definition  of positive globally  minimizing  we can get  that $u(t)= h_{x(s),u(s)}(x(t),t-s). $
		By taking $s=0$, we have
		\begin{align*}
			|u(t)|=&\, | h_{x(0),u(0)}(x(t),t)| 
			= \Big|h_{x(0),u(0)}(x(t),t)- h_{x(0),u(0)}(x(0),t)+h_{x(0),u(0)}(x(0),t)\Big| \\
			=&\, |T_{t-1}^- h_{x(0),u(0)}(x(t),1)- T_{t-1}^- h_{x(0),u(0)}(x(0),1)|+ |h_{x(0),u(0)}(x(0),t)|. 
		\end{align*}

         Let $(x(0), p(0), u(0)) = (x_0,p_0,u_0)$.
		As the convergence of $T^-_t$, there exists a constant $K>0$  such that $|T_t^- h_{x_0,u_0}(x,1)|\leqslant K $ for all $x\in M $ and all $t>1$. Combined with  \cite[Lemma 3.1]{WWY1}, there exists a consistent Lipschitz constant $l_1>0$  such that   
	\begin{align*}
		|T_t^- h_{x_0,u_0}(x,1)- T_t^- h_{x_0,u_0}(y,1)| \leqslant &\,  \sup_{z\in M} \Big|h_{z,T_{t-1}^- h_{x_0,u_0}(z,1)  } (x,1)- h_{z,T_{t-1}^- h_{x_0,u_0}(z,1)  } (y,1) \Big| \\
		\leqslant &\, l_1 |x-y|, 
	\end{align*}
which means that $\{T_t^- h_{x_0,u_0}(x,1) \}_{t>2} $ is uniformly bounded and equi-Lipschitz continuous.	Thus, 
		$$
			|u(t)|
			\leqslant {l_1} \cdot |x(t)-x(0)|+ |T_{t-1}^- h_{x(0),u(0)}(x(0),1) |,  
	$$
		so $u(t)$ is bounded.

  Actually, we can get from the definition of positive globally  minimizing that
		$$
		u(t)= h_{x(t-1),u(t-1)}(x(t+1),2) . 
		$$
		Combining $u(t)$ is bounded and  applying Proposition \ref{prop3.1}, $p(t)$ is bounded on $[0,+\infty)$.
	\end{proof}

	\begin{itemize}
		\item [ ]\textbf{Step2:} \quad $\omega$-limit set of globally minimizing orbits $\subset \widetilde{\mathcal{N}}_{u_-} \subset \Lambda_{u_-} $
	\end{itemize}
	
Due to the boundedness of globally minimizing orbits, the $\omega$-limit set $\omega (x(0),p(0),u(0)) $ is not empty.
Take $(\bar{x},\bar{p},\bar{u})\in\omega {(x_0,p_0,u_0)}$. Let $(\bar{x}(t),\bar{p}(t),\bar{u}(t))=\Phi^H_t(\bar{x},\bar{p},\bar{u}),\ t\in\R$. There is a sequence $\{t_n\}\subseteq\R$ such that 
		$$t_n\to+\infty\quad \text{and} \quad (x(t_n),p(t_n),u(t_n))\to(\bar{x},\bar{p},\bar{u}), \quad as\ n\to+\infty.$$
		For any $0\leqslant s<t  $,  from the definition of globally minimizing we get that
		$$u(t_n+t)=h_{x(t_n+s),u(t_n+s)}(x(t_n+t),t-s).$$
		Using the continuity of {$(x_0,u_0,x)\mapsto h_{x_0,u_0}(x,t-s)$} and $\Phi_t^H $,  taking $n\to +\infty $,  we have
		\begin{equation}\label{addeq1}
			\bar{u}(t)=h_{\bar{x}(s),\bar{u}(s)}(\bar{x}(t),t-s),\quad \forall t>s.
		\end{equation}
		Thus,  $(\bar{x}(\cdot ),\bar{u}(\cdot))$ is globally minimizing. 
  
Besides, $h_{x_0,u_0}(x,t+1)=T^-_th_{x_0,u_0}(x,1),$ for any $ x\in M,\ t\geqslant0.$
		Applying Theorem \ref{thm1}, there exists $u_-\in \mathcal{S}^-$ such that
		$$\lim_{t\to+\infty}h_{x_0,u_0}(x,t+1)=\lim_{t\to+\infty}T^-_th_{x_0,u_0}(x,1)=u_-(x), \quad x\in  M.$$
		 Combining  
$\{T_t^- h_{x_0,u_0}(x,1) \}_{t>2} $ is uniformly bounded and equi-Lipschitz, we have
\begin{align*}
&\,  \lim_{n\to+\infty}\Big| h_{x_0,u_0}(x(t_n+t),t_n+t)-u_-(\bar{x}(t)) \Big|\\
 \leqslant &\, \lim_{n\to+\infty} \big( \Big| h_{x_0,u_0}(x(t_n+t),t_n+t)-h_{x_0,u_0}(\bar{x}(t),t_n+t) \Big|+\Big| h_{x_0,u_0}(\bar{x}(t),t_n+t)-u_-(\bar{x}(t)) \Big| \big) \\
 \leqslant &\, \lim_{n\to+\infty} \big( {l_1}\cdot | x(t_n+t)-\bar{x}(t) |+0 \big)=0.
\end{align*}
	Therefore, for any $t\in\R$, we get 
	\begin{equation}\label{eq:pf-cor-1}
		 \bar{u}(t)=\lim_{n\to+\infty}u(t_n+t)= \lim_{n\to+\infty}h_{x_0,u_0}(x(t_n+t),t_n+t)=u_-(\bar{x}(t)).
	\end{equation}
		From \eqref{addeq1}  , we know that for any $t\geqslant s$,  
		\begin{equation*}
			\begin{split}
				&\lim_{\tau\to s}h_{\bar{x}(s),\bar{u}(s)}(\bar{x}(t),t-\tau)=\bar{u}(t)=u_-(\bar{x}(t)) = 
    T^-_{\nu}u_-(\bar{x}(t)) \\ 
    = & \inf_{z\in M}h_{z,u_-(z)}(\bar{x}(t),\nu) \leqslant h_{\bar{x}(s),u_-(\bar{x}(s))}(\bar{x}(t),\nu)=h_{\bar{x}(s),\bar{u}(s)}(\bar{x}(t),\nu),\ \forall \nu >0.
			\end{split}
		\end{equation*} 
		As a consequence, 
		$$\bar{u}(t)=\lim_{\tau\to s}h_{\bar{x}(s),\bar{u}(s)}(\bar{x}(t),t-\tau)=\inf_{\nu>0}h_{\bar{x}(s),\bar{u}(s)}(\bar{x}(t),\nu),\quad \forall t\geqslant s.$$
		So, $(\bar{x}(\cdot),\bar{u}(\cdot))$ is semi-static.
		\medskip
		
		Finally, we show that
		$
		\bar{p}(t)= \frac{\partial L}{\partial v}(\bar x(t), \dot {\bar x}(t), \bar u(t)).
		$
   For $\tau \in \mathbb{R}$,
   set 
   $$ \big(\tilde{x}(\tau),\tilde{p}(\tau) ,\tilde{u}(\tau) \big) = \Phi_{\tau}^H  \big(\bar{x}, \frac{\partial L}{\partial v}(\bar {x}, \dot {\bar {x}}, \bar {u}), \bar{u} \big ) .$$
   It follows that $\bar{x}(0) = \tilde{x}(0) = \bar{x}$, $\bar{u}(0) = \tilde{u}(0) = \bar{u}$. Since $u_-$ is a solution of equation \eqref{1-2}, then  $\big(\tilde{x}(\tau),\tilde{p}(\tau) ,\tilde{u}(\tau) \big) \in \Lambda_{u_-}$.
We argue by contradiction and  assume that $\bar{p} \neq \frac{\partial L}{\partial v}(\bar x, \dot {\bar x}, \bar {u})$.
   Since 
   $$\bar{u} = u_-(\bar{x}) = T^-_1 u_-(\bar{x}) \leqslant h_{\bar{x}(-1), u_-(\bar{x}(-1))}(\bar{x},1) \leqslant \bar{u} ,$$ then 
\begin{align*}
    u_-({\tilde{x}(1)} ) &= T^-_2u_-(\tilde{x}(1)) \leqslant h_{\bar{x}(-1), u_-(\bar{x}(-1))}(\tilde{x}(1),2) < h_{\bar{x}, h_{\bar{x}(-1), u_-(\bar{x}(-1))}(\bar{x},1)}(\tilde{x}(1),1)  \\ &
= h_{\bar{x}, \bar{u}}(\tilde{x}(1),1)   =  h_{\tilde{x}(0), u_-(\tilde{x}(0))}(\tilde{x}(1),1)
\leqslant \tilde {u}(1) = u_-({\tilde{x}(1)} ) ,
\end{align*} 
which makes a contradiction. Hence, we obtain that the $\omega$-limit set of positive globally minimizing orbits is contained in the set of semi-static orbits. Since \eqref{eq:Mane}, one follows
\begin{align*}
    \omega (x(0),p(0),u(0)) \subset \widetilde{\mathcal{N}}_{u_-} \subset \Lambda_{u_-}.  
\end{align*}  
We have completed the whole proof of the Theorem \ref{thm2}. \qed
   
\subsection{Proof of Theorem \ref{cor1}}

We show that there exists $p_0\in T^*_xM$ such that $\Phi_t^H(x_0,p_0,u_0), t>0$ is a positively
global minimizer.
		 {From Theorem \ref{thm1}, we have}
		 $$
		 \lim_{t\to +\infty} h_{x_0,u_0} (x,t)= \lim_{t\to +\infty}T_{t-1}^- h_{x_0,u_0} (x,1)= u_-(x).
		 $$
		 For each $n\in \mathbb{N}$,  by \cite[Theorem 2.1]{WWY2},  $h_{x_0,u_0} (x,n)$ admits a minimizer $\gamma_n$  satisfying 
	\begin{align*}
		 u_n(t)=&\, h_{\gamma_n(s),u_n(s)}(\gamma_n(t),t-s), \quad 0<s<t\leqslant n,\\
		  p_n(t)=&\, \frac{\partial L}{\partial v}( \gamma_n(t), \dot  \gamma_n(t), u(\gamma_n(t)) ), \quad  t>0.
	\end{align*}
		 Taking $n\to +\infty$, as $\{p_n(0)\}$ is bounded, there exists convergent subsequence $\{p_{n_k}(0)\}$, denoted by $p_{n_k}(0) \to p_0 $ as $ k \to + \infty$. 
		 
		 \medskip
For any fixed $0< t_1<t_2$, we have 
\begin{align*}
    & u_{n_k}(t_2)=h_{\gamma_{n_k}(t_1),u_{n_k}(t_1)}(\gamma_{n_k}(t_2),t_2-t_1), \\
    & p_{n_k}(t_1)= \frac{\partial L}{\partial v}( \gamma_{n_k}(t_1), \dot  \gamma_{n_k}(t_1), u(\gamma_{n_k}(t_1)) ) .
\end{align*}
By the solution depending on initial value and the uniqueness of solution, one has  
$$
 \displaystyle \lim_{k \to +\infty}  \gamma_{n_k}(t_i)=x(t_i ) , \quad 
 {\displaystyle \lim_{k \to +\infty} u_{n_k}(t_i)=u(t_i), \quad \displaystyle \lim_{k \to +\infty} p_{n_k}(t_i)=p(t_i) , \quad i=1,2.}
$$
Then, we get 
\begin{align*}
		&	u(t_2)=h_{x(t_1),u(t_1)}(x(t_2),t_2-t_1),  \quad  0< t_1<t_2, \\
   & p(t_1)=\frac{\partial L}{\partial v}( x(t_1), \dot  x(t_1), u(x(t_1)) ), \quad  t_1>0.
		\end{align*}	
By arbitrariness and Definition \ref{def:1},  $\Phi_t^H(x_0,p_0,u_0), t>0$ is a positive globally minimizer. Combined with Theorem \ref{thm2}, we complete the proof. $\hfill\square$

\begin{example}
	We consider the dissipative pendulum model with Hamiltonian $H(x,p,u)= \frac{1}{2} p^2 -1+\cos x +u$ with $x\in [0,2\pi]$. The  Hamiltonian  admits  the Ma\~n\'e set  $\{	(0,0,0 ) \}$, which is a hyperbolic fixed point of \eqref{eq:ode1}. By the Theorem \ref{thm2} and Theorem \ref{cor1},  there exists $ p_0\in T^*M $ such that  $ \omega(x_0,p_0,u_0) = \{(0,0,0 )\}$ 
 if and only if $ (x_0,p_0,u_0)\in W^s(0,0,0 ) $, where $W^s(0,0,0 )$ is denoted by the stable  manifold about point $(0,0,0)$ with respect to \eqref{eq:ode1}. We can  project \eqref{eq:ode1} onto the $x,p$-plane as 
 $$
 \begin{cases}
 	\dot x= p, \\
 	\dot p=\sin x-p, 
 \end{cases}
 $$
  as shown in the figure below.
 \begin{figure}[h]
\begin{center}
\includegraphics[width=11cm]{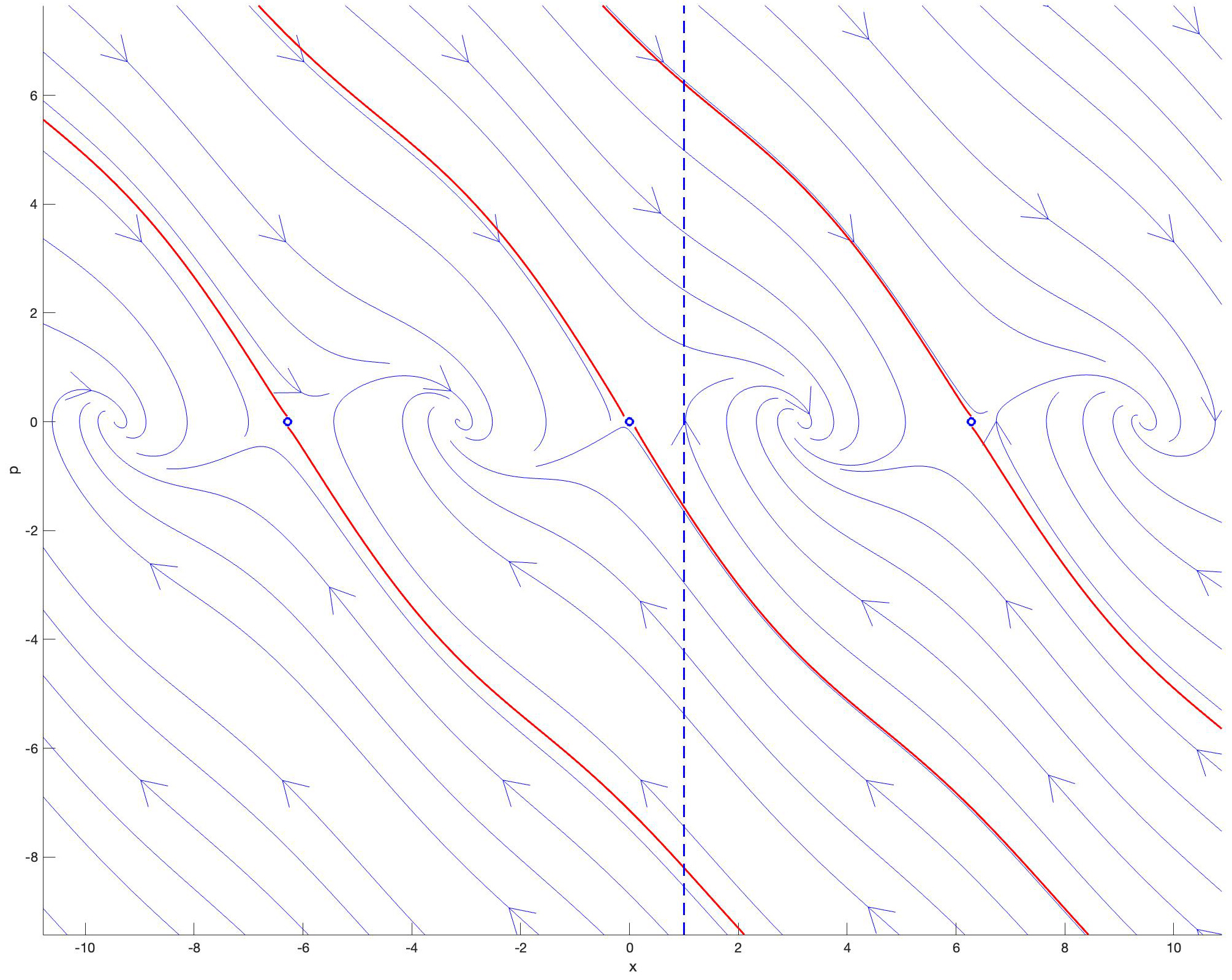}
\end{center}
\caption{The stable 
manifolds of the saddle are highlighted by thick red line, and  the dotted blue line  represents  the line $(x_0,p),p\in \R$. The intersection of both of them  admits the selection of $p_0$.}
\end{figure}

\end{example}

 \medskip

\appendix 
\section{Some propositions for semigroups  $T_t^\pm$}
 
   \begin{proposition} \cite[Proposition 4.3]{WWY1} \label{A5}
 	 Let $\varphi_1,\varphi_2 \in C(M,\R)$, if $\varphi_1(x) < \varphi_2 (x) $ (resp. $\varphi_1(x) \leqslant \varphi_2 (x) $ ) for each $x\in M$, then 
	$$
	T_t^{\pm} \varphi_1(x) < T_t^{\pm} \varphi_2 (x), \Big(resp. \ T_t^{\pm} \varphi_1(x) \leqslant T_t^{\pm} \varphi_2 (x) \Big)  \quad \forall (x,t)\in M \times \R^+.
	$$
 \end{proposition}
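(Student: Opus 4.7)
My plan is to exploit the variational/implicit-action representation of the two semigroups. Recall from \cite{WWY} that under hypotheses (H1)--(H3) the implicit action functions $h_{y,u_0}(x,t)$ and $h^{y,u_0}(x,t)$ are characterized by a fixed-point/variational equation, and one has
$$T_t^-\varphi(x) = \inf_{y\in M} h_{y,\varphi(y)}(x,t),$$
with an analogous sup-representation for $T_t^+$. The whole proposition will follow from a single pointwise monotonicity estimate for $u_0 \mapsto h_{y,u_0}(x,t)$ (and its backward analogue), so I would devote the core of the proof to that estimate and then let the rest drop out by taking $\inf/\sup$ over the endpoint.

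The key lemma I would first establish is: for every $\epsilon \geqslant 0$ and every $(y,x,t)$,
$$h_{y,u_0}(x,t) \;\leqslant\; h_{y,u_0+\epsilon}(x,t) \;\leqslant\; h_{y,u_0}(x,t)+\epsilon e^{\kappa t}, \qquad h_{y,u_0+\epsilon}(x,t) \;\geqslant\; h_{y,u_0}(x,t)+\epsilon e^{-\kappa t}.$$
The mechanism is a Gronwall-type comparison on the $u$-component of the implicit-action ODE. Pick a minimizing curve $\gamma$ realizing $h_{y,u_0+\epsilon}(x,t)$, and consider the $u$-components $u(s)$ and $\tilde u(s)$ corresponding along the same $\gamma$ to initial data $u_0+\epsilon$ and $u_0$ respectively. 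The difference $\delta(s):=u(s)-\tilde u(s)$ satisfies a scalar linear equation $\dot\delta(s)=a(s)\delta(s)$ where $a(s)$ is an average of $-\partial_u L$ along the two trajectories; by (H3) we have $|a(s)|\leqslant \kappa$, so Gronwall's inequality pins $\delta(s)$ in $[\epsilon e^{-\kappa s},\epsilon e^{\kappa s}]$. Running the same argument starting from a minimizer for $h_{y,u_0}$ yields the matching upper bound.

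With this lemma in hand, the non-strict inequality is immediate: if $\varphi_1 \leqslant \varphi_2$, then $h_{y,\varphi_1(y)}(x,t) \leqslant h_{y,\varphi_2(y)}(x,t)$ for every $y$, and passing to the infimum gives $T_t^-\varphi_1 \leqslant T_t^-\varphi_2$. For the strict inequality, compactness of $M$ and continuity of $\varphi_i$ yield a uniform gap $\epsilon_0 := \min_{x\in M}(\varphi_2(x)-\varphi_1(x)) > 0$, so
$$T_t^-\varphi_2(x) \;\geqslant\; \inf_{y\in M}\bigl[h_{y,\varphi_1(y)}(x,t)+\epsilon_0 e^{-\kappa t}\bigr] \;=\; T_t^-\varphi_1(x)+\epsilon_0 e^{-\kappa t} \;>\; T_t^-\varphi_1(x)$$
for every $t\geqslant 0$. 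The statement for $T_t^+$ is obtained by the symmetric sup-representation together with the analogous Gronwall bound on the backward implicit action $h^{y,u_0}$.

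The main obstacle I anticipate is pinning down the Gronwall estimate cleanly: the $u$-component is defined only implicitly through the action integral, so one has to verify that tracing the same minimizing curve $\gamma$ with two different initial $u$-values produces two well-defined trajectories on the full interval $[0,t]$ and that the resulting linearization $\dot\delta=a(s)\delta$ genuinely holds with $|a(s)|\leqslant \kappa$. Once this single technical lemma is settled, both the weak and strict halves of the proposition, and the parallel statement for $T_t^+$, follow in one line each; no appeal to the PDE comparison principle is required, although the weak half could alternatively be read off directly from comparison for viscosity solutions of \eqref{1-1}.
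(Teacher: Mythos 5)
Your proposal is essentially correct, and it is worth noting that the paper itself offers no proof of this statement: Proposition \ref{A5} is imported verbatim from \cite[Proposition 4.3]{WWY1}, and the argument there runs along exactly the lines you describe, namely the representations $T_t^-\varphi(x)=\inf_{y\in M}h_{y,\varphi(y)}(x,t)$ and $T_t^+\varphi(x)=\sup_{y\in M}h^{y,\varphi(y)}(x,t)$ combined with the (strict) monotonicity of the implicit action functions in the initial value $u_0$, which is itself a Gronwall estimate using $|\partial_u L|\leqslant\kappa$ from (H3). The only step you should make explicit is the one you flag: the inequality $\tilde u(t)\geqslant h_{y,u_0}(x,t)$ for the Carath\'eodory solution $\tilde u$ of $\dot{\tilde u}=L(\gamma,\dot\gamma,\tilde u)$, $\tilde u(0)=u_0$, along the minimizer $\gamma$ of $h_{y,u_0+\epsilon}(x,t)$ requires the known characterization of $h_{y,u_0}(x,t)$ as the infimum of such endpoint values over all admissible curves (available in \cite{WWY,WWY1}); global existence of $\tilde u$ on $[0,t]$ is automatic since the ODE is globally Lipschitz in $u$ with constant $\kappa$. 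With that citation in place, your quantitative bound $h_{y,u_0+\epsilon}(x,t)\geqslant h_{y,u_0}(x,t)+\epsilon e^{-\kappa t}$, together with the uniform gap $\epsilon_0=\min_M(\varphi_2-\varphi_1)>0$ from compactness, correctly yields the strict inequality after taking the infimum (and the symmetric argument handles $T_t^+$), so your proof matches the standard one in the cited source rather than providing a genuinely different route.
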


   \begin{proposition} \label{A6}
 	\cite[Proposition 10]{W-Y} Let $\varphi \in C(M,\R)$, then
 $$T_t^- \circ T_t^+ \varphi \geqslant \varphi ,\quad T_t^+ \circ T_t^- \varphi \leqslant \varphi,  \quad  \forall \ t >0.$$
 \end{proposition}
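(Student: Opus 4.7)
\medskip

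The plan is to exploit the variational characterization of the semigroups $T_t^{\pm}$ coming from the implicit variational principle of \cite{WWY}. The key representations are
\[
T_t^-\varphi(x)=\inf_{y\in M} h_{y,\varphi(y)}(x,t),
\]
together with the dual description of $T_t^+\varphi$ in terms of the backward implicit action function $h^{\cdot,\cdot}$. These two pictures are coupled by the monotonicity/duality relation
\[
h^{x_0,u_0}(x,t)\leqslant u \iff h_{x,u}(x_0,t)\geqslant u_0,
\]
which expresses that both action functions parametrize the same minimizing trajectories of the contact system traversed in opposite time directions. This equivalence rests on the strict monotonicity of $h_{x_0,u_0}(x,t)$ in its initial datum $u_0$, which is a direct consequence of (H3) and the implicit variational principle.

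To prove $T_t^-\circ T_t^+\varphi\geqslant\varphi$, I would fix $x\in M$ and $t>0$, set $\psi:=T_t^+\varphi$, and, for each auxiliary point $y\in M$, use the defining property of $T_t^+$ at $y$ together with the duality displayed above to deduce $h_{y,\psi(y)}(x,t)\geqslant\varphi(x)$. Taking the infimum over $y\in M$ on the left-hand side and invoking the formula for $T_t^-$ then yields $T_t^-\psi(x)\geqslant\varphi(x)$, which is exactly the required inequality. The opposite inequality $T_t^+\circ T_t^-\varphi\leqslant\varphi$ is established by the symmetric argument: starting from the fact that $T_t^-\varphi(y)\leqslant h_{z,\varphi(z)}(y,t)$ for every $z\in M$, one applies the dual characterization of $T_t^+$ and uses the duality in the reverse direction to conclude.

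The main obstacle is precisely the $u$-dependence of the contact Lagrangian. Because the action along a minimizing curve is itself determined by an implicit ODE for $u$, the classical Fenchel--Moreau / Lax--Oleinik style duality cannot be imported verbatim from the Tonelli case. The substitute is the monotonicity/inversion identity above, together with the continuity and compact-image properties of $h_{x_0,u_0}(x,t)$ in $u_0$ established in \cite{WWY,WWY1}. Once these are in place, both inequalities reduce to the short chase of inequalities sketched in the preceding paragraph, and no further analytic input is required.
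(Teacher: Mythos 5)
The paper does not prove this proposition at all: it is quoted verbatim from \cite[Proposition 10]{W-Y}, so there is no internal argument to compare against. Your sketch is a correct reconstruction of the standard proof. With the representations $T_t^-\varphi(x)=\inf_{y\in M}h_{y,\varphi(y)}(x,t)$ and $T_t^+\varphi(x)=\sup_{y\in M}h^{y,\varphi(y)}(x,t)$, the exact inversion identity $u=h_{x_0,u_0}(x,t)\Leftrightarrow u_0=h^{x,u}(x_0,t)$ combined with the strict monotonicity of $h_{x_0,\cdot}(x,t)$ and $h^{x_0,\cdot}(x,t)$ in the scalar argument gives precisely your inequality form of the duality, and then the chase goes through: for $\psi=T_t^+\varphi$ one has $\psi(y)\geqslant h^{x,\varphi(x)}(y,t)$ for every $y$, hence $h_{y,\psi(y)}(x,t)\geqslant h_{y,h^{x,\varphi(x)}(y,t)}(x,t)=\varphi(x)$, and taking the infimum over $y$ yields $T_t^-\circ T_t^+\varphi\geqslant\varphi$; the second inequality is the symmetric computation with $\sup$ and $h^{\cdot,\cdot}$. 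The only items you should state explicitly rather than allude to are the $\sup$-formula for $T_t^+$ and the exact inversion identity above (both are in \cite{WWY,WWY1}), since the inequality-form duality you invoke is a consequence of these two facts rather than of monotonicity alone.
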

\begin{proposition}{ \cite[Thmorem.1.2]{WWY1}}\label{A8}
Let $\varphi\in C(M,\R)$, if the function $(x,t)\mapsto T_t^- \varphi(x) $ or $(x,t)\mapsto  T_t^+ \varphi(x) $  is bounded on $M \times [0,+\infty)$, then 
 $$ \liminf_{t\to +\infty} T^-_t \varphi(x) \in \mathcal{S}^- , \quad  \limsup_{t\to +\infty} T^+_t \varphi(x) \in \mathcal{S}^+,
  $$  
  and $\displaystyle \limsup_{t\to +\infty} T^\pm_t \varphi(x) $ are subsolutions.
\end{proposition}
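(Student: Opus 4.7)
My plan is to prove the proposition in three stages: (i) establish compactness of the orbit $\{T_t^- \varphi\}$, (ii) apply half-relaxed-limit stability to obtain one-sided viscosity inequalities, and (iii) upgrade $\underline u := \liminf_{t\to\infty} T_t^- \varphi$ from a supersolution to a full viscosity solution. The corresponding claims for $T_t^+$ follow by the dual argument applied to the Hamiltonian $H(x,-p,-u)$, so I focus on the $T_t^-$ side.

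For (i), under (H1)-(H3) the semigroup $T_t^-$ is regularizing: for $t\geqslant 1$ each $T_t^- \varphi$ is Lipschitz in $x$ with a constant independent of $t$, a standard consequence of superlinearity combined with the variational representation underlying $T_t^-$. Together with the assumed $L^\infty$ bound, Arzel\`a--Ascoli makes the family $\{T_t^- \varphi\}_{t\geqslant 1}$ relatively compact in $C(M,\R)$; in particular $\underline u$ and $\overline u := \limsup_{t\to\infty} T_t^- \varphi$ are Lipschitz on $M$ and are attained as uniform limits along appropriate diverging time sequences. For (ii), fix any $t_n\to\infty$ and set $v_n(x,t) := T_{t_n+t}^- \varphi(x)$; each $v_n$ is a viscosity solution of \eqref{1-1}, and the family is uniformly bounded and equi-Lipschitz in $x$. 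By the Barles--Perthame stability theorem for half-relaxed limits, the lower (resp. upper) half-relaxed limit of $\{v_n\}$ is a viscosity supersolution (resp. subsolution) of \eqref{1-1}. Because shifting $t$ does not affect $\liminf_{t\to\infty}$ or $\limsup_{t\to\infty}$, both half-relaxed limits are $t$-independent, hence coincide with $\underline u$ and $\overline u$ respectively. Since a $t$-independent sub/supersolution of $\partial_t w + H(x,\partial_x w,w) = 0$ is precisely a sub/supersolution of \eqref{1-2}, this already delivers the subsolution property of $\overline u$ (all that is claimed for $\limsup T_t^- \varphi$) together with the supersolution property of $\underline u$.

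The hard step is (iii). Invoking the characterization $u\in \mathcal{S}^-$ iff $T_s^- u = u$ for every $s\geqslant 0$, one is reduced to showing $T_s^- \underline u = \underline u$. The inequality $T_s^- \underline u \leqslant \underline u$ follows from applying Proposition \ref{A1} to the supersolution $\underline u$. For the reverse, I would select a sequence $t_n\to\infty$ along which $T_{t_n}^- \varphi \to u^{\ast}$ uniformly with $u^\ast \geqslant \underline u$ and equality at a prescribed base point; hypothesis (H3) yields a Gronwall-type estimate making $T_s^-$ Lipschitz in its initial datum under the sup norm, so $T_{s+t_n}^-\varphi = T_s^- T_{t_n}^- \varphi \to T_s^- u^\ast$ uniformly and therefore $T_s^- u^\ast(x) \geqslant \liminf_n T_{s+t_n}^- \varphi(x) \geqslant \underline u(x)$ for every $x$. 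Combined with the monotonicity $T_s^- \underline u \leqslant T_s^- u^\ast$ supplied by Proposition \ref{A5} and a careful pinning argument at base points where $u^\ast = \underline u$, one extracts $T_s^- \underline u \geqslant \underline u$ pointwise. The main obstacle is precisely this commutation-of-limits issue: $\liminf_t$ and the nonlinear semigroup $T_s^-$ do not commute a priori, and the identity $T_s^- \underline u = \underline u$ can only be coaxed out by leveraging equi-Lipschitz compactness in tandem with the continuity of $T_s^-$ on $(C(M,\R),\|\cdot\|_\infty)$.
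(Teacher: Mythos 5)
The paper itself offers no proof of Proposition \ref{A8}: it is quoted verbatim from \cite[Theorem 1.2]{WWY1}, so the only meaningful comparison is with that source, whose argument runs through the implicit action function $h_{y,u}(x,t)$ rather than through PDE stability. Your stages (i)--(ii) are essentially fine: boundedness plus the uniform Lipschitz estimate give compactness, and the half-relaxed-limit (Barles--Perthame) argument does show that $\overline u:=\limsup_{t\to\infty}T^-_t\varphi$ is a subsolution and $\underline u:=\liminf_{t\to\infty}T^-_t\varphi$ is a supersolution of \eqref{1-2} --- though you should take the relaxed limits over the full continuum $t\to\infty$, not over a fixed sequence $t_n$, since for a fixed sequence the relaxed limits need not be $t$-independent nor equal to $\underline u,\overline u$. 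Two small slips: Proposition \ref{A1} concerns subsolutions and gives $T^-_t\varphi\geqslant\varphi$, so it cannot yield $T^-_s\underline u\leqslant\underline u$; that inequality instead follows from the comparison principle (Proposition \ref{prop:comparison principle}) applied to the time-independent supersolution $\underline u$.

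The genuine gap is in stage (iii), which is exactly the content of the cited theorem: the reverse inequality $T^-_s\underline u\geqslant\underline u$. What you actually establish is $T^-_s u^{\ast}\geqslant\underline u$ together with the monotonicity $T^-_s\underline u\leqslant T^-_s u^{\ast}$; these two inequalities chain in the same direction and give no lower bound on $T^-_s\underline u$ whatsoever. The proposed ``pinning'' at a point where $u^{\ast}=\underline u$ cannot repair this, because $T^-_s$ is nonlocal: the value $T^-_s\underline u(x)$ is an infimum involving the initial datum on all of $M$, and $\underline u\leqslant u^{\ast}$ with equality at one point still only yields an upper bound there. So the crucial upgrade of $\underline u$ from supersolution to element of $\mathcal{S}^-$ is not proved. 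A workable route, in the spirit of \cite{WWY1}, is to set $v_n:=\inf_{t\geqslant n}T^-_t\varphi$ (Lipschitz, nondecreasing in $n$, converging uniformly to $\underline u$ by Dini), and use the representation $T^-_s\psi(x)=\inf_{y\in M}h_{y,\psi(y)}(x,s)$ together with the continuity and monotonicity of $u_0\mapsto h_{y,u_0}(x,s)$: then $T^-_s$ commutes with pointwise infima, so $T^-_s v_n=\inf_{t\geqslant n}T^-_s T^-_t\varphi=v_{n+s}$, and letting $n\to\infty$, using the sup-norm continuity of $T^-_s$ (your Gronwall estimate via (H3)), gives $T^-_s\underline u=\underline u$ for all $s$, hence $\underline u\in\mathcal{S}^-$. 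Without some such mechanism tied to the variational structure, your step (iii) does not close, as you yourself half-acknowledge.
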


\begin{proposition} \cite[Proposition 16]{SYZ} \label{A7}
Let $\varphi \in C(M,\mathbb{R})$.  If $T_t^-\varphi(x)$ is unbounded from above on $M\times (0,+\infty)$, then for any $c\in \mathbb{R}$, there is $t_c>0$ such that $T_{t_c}^-\varphi(x)\geqslant \varphi(x)+c,  $ for any $ x\in M$. 
If $T_t^+\varphi(x)$ is unbounded from below on $M\times (0,+\infty)$, then for any $c\in \mathbb{R}$, there is $t_c>0$ such that $T_{t_c}^+\varphi(x)\leqslant \varphi(x)-c $ for any $  x\in M$.	
 
\end{proposition}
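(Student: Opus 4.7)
The plan is to reduce the problem to the equi-Lipschitz regularization property of the semigroups $\{T_t^\pm\}_{t\geqslant 0}$: any Lipschitz oscillation bound on $T_t^-\varphi(\cdot)$ that is uniform in $t$ lets us upgrade pointwise unboundedness on $M\times(0,+\infty)$ to uniform unboundedness over $M$, from which the conclusion follows by choosing a single time slice.

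First, I would record the spatial regularization. Using the inf-representation
\[
T_t^-\varphi(x)=\inf_{y\in M} h_{y,\varphi(y)}(x,t),
\]
the semigroup identity $T_t^-\varphi = T_{t-1}^-\circ T_1^-\varphi$ for $t\geqslant 1$, and the Lipschitz estimates for $x\mapsto h_{y,u}(x,1)$ that are already used in the paper (the same argument reproduced in the bounding of $u(t)$ after Lemma~\ref{lem-bound-minimizer}, via Proposition~\ref{prop3.1}), one obtains constants $t_0>0$ and $L>0$, depending only on $\varphi$ and $H$, such that for every $t\geqslant t_0$ the function $x\mapsto T_t^-\varphi(x)$ is $L$-Lipschitz on $M$. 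Since $M$ is compact with finite diameter $D$, this gives the uniform oscillation bound
\[
\max_{x\in M}T_t^-\varphi(x)-\min_{x\in M}T_t^-\varphi(x)\leqslant LD,\qquad \forall\,t\geqslant t_0.
\]

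Next I exploit the unboundedness hypothesis. By assumption there is a sequence $(y_n,s_n)\in M\times(0,+\infty)$ with $T_{s_n}^-\varphi(y_n)\to+\infty$. Because the $T_t^-\varphi$ are uniformly bounded on $M\times[0,t_0]$ (continuity of $(x,t)\mapsto T_t^-\varphi(x)$ on a compact set), we may assume $s_n\geqslant t_0$. The oscillation bound then yields
\[
\min_{x\in M}T_{s_n}^-\varphi(x)\;\geqslant\;T_{s_n}^-\varphi(y_n)-LD\;\longrightarrow\;+\infty.
\]
Hence, given $c\in\mathbb{R}$, one picks $t_c:=s_n$ with $n$ so large that $\min_{x\in M}T_{s_n}^-\varphi(x)\geqslant \max_{x\in M}\varphi(x)+c$; this immediately gives $T_{t_c}^-\varphi(x)\geqslant \varphi(x)+c$ on $M$, as required. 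The second statement about $T_t^+$ follows by the same argument applied to the dual semigroup: $T_t^+\varphi(x)=\sup_{y\in M} h^{y,\varphi(y)}(x,t)$ is equi-Lipschitz for $t\geqslant t_0$ by the analogous estimate on $x\mapsto h^{y,u}(x,1)$, and if $T_t^+\varphi$ is unbounded below at some point, the oscillation bound forces $\max_{x\in M}T_{s_n}^+\varphi(x)\to-\infty$ along a sequence $s_n\to\infty$, whence $T_{t_c}^+\varphi(x)\leqslant \min_{x\in M}\varphi(x)-c\leqslant \varphi(x)-c$ for suitable $t_c$.

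The only non-cosmetic step is the equi-Lipschitz claim of the first paragraph: one must check that $L$ can be chosen independent of $t\geqslant t_0$ (not just for individual $t$). The clean way is the factorization $T_t^-\varphi=T_1^-(T_{t-1}^-\varphi)$, combined with the fact that $T_1^-\psi$ is $L$-Lipschitz with $L$ depending only on a uniform a priori $C^0$ bound on $\psi$ (Proposition~\ref{prop3.1} controls the minimizers of $h_{y,u}(\cdot,1)$ in a compact set of $T^*M\times\mathbb{R}$ once $|u|$ is bounded). Thus I would first argue that either $T_t^-\varphi$ is already uniformly bounded from above on $M\times[t_0,+\infty)$ (in which case a uniform $C^0$ bound on $T_{t-1}^-\varphi$ is available and $L$ is uniform), or it is not, in which case one can still apply the regularization on each time slice and conclude the uniform unboundedness directly — and this dichotomy is exactly what the statement needs.
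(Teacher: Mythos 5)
First, a point of order: the paper does not prove Proposition \ref{A7} at all — it is quoted from \cite[Proposition 16]{SYZ} — so your attempt cannot be compared with an internal proof and must stand on its own.

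Your overall scheme (a $t$-uniform oscillation bound on $T_t^-\varphi$, plus blow-up along a sequence $(y_n,s_n)$, forces $\min_{x\in M}T_{s_n}^-\varphi(x)\to+\infty$, and then $t_c:=s_n$ for $n$ large works) would be fine \emph{if} the first ingredient were available, and the surrounding reductions (discarding times in $[0,t_0]$ by continuity of $(x,t)\mapsto T_t^-\varphi(x)$, the final comparison against $\max_M\varphi+c$) are correct. The gap is the equi-Lipschitz claim itself, which you flag but do not close. The estimates you invoke — Proposition \ref{prop3.1} and \cite[Lemma 3.1]{WWY1}, i.e.\ the Lipschitz bound for $x\mapsto h_{y,u}(x,1)$ — produce constants that depend on the interval $[a,b]$ containing the initial value $u$; nothing in (H1)--(H3) gives a Lipschitz constant for $h_{y,u}(\cdot,1)$ uniform over all $u\in\R$. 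Writing $T_t^-\varphi=T_1^-\bigl(T_{t-1}^-\varphi\bigr)=\inf_y h_{y,\,T_{t-1}^-\varphi(y)}(\cdot,1)$ therefore yields a Lipschitz constant $L(t)$ controlled only through $\|T_{t-1}^-\varphi\|_\infty$ — and the hypothesis of the proposition is precisely that this quantity is \emph{not} uniformly bounded. This is exactly how the paper itself deploys these estimates in Lemma \ref{lem-bound-minimizer}: a uniform $C^0$ bound (there supplied by the convergence in Theorem \ref{thm1}) is secured \emph{before} the equi-Lipschitz constant $l_1$ is extracted.

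The dichotomy in your last paragraph does not repair this. Its first branch — $T_t^-\varphi$ uniformly bounded from above on $M\times[t_0,+\infty)$ — is empty under the standing hypothesis, since $T_t^-\varphi$ is continuous, hence bounded, on $M\times[0,t_0]$, so unboundedness from above on $M\times(0,+\infty)$ already forces unboundedness on $M\times[t_0,+\infty)$; and even in that branch one would additionally need a uniform lower bound to feed into Proposition \ref{prop3.1}. The second branch, ``apply the regularization on each time slice,'' only gives $\max_M T_t^-\varphi-\min_M T_t^-\varphi\leqslant L(t)D$ with $L(t)$ possibly growing, from which $\min_x T_{s_n}^-\varphi(x)\to+\infty$ does not follow. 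So the central step — upgrading unboundedness at a sequence of points to the uniform bound $T_{t_c}^-\varphi\geqslant\varphi+c$ on all of $M$ — remains unproved, and this upgrade is essentially the entire content of the proposition. Any complete argument has to obtain the spatial uniformity by a mechanism that does not presuppose a $C^0$ bound on $T_t^-\varphi$ (e.g.\ via monotonicity and the two-sided dependence of $h_{x_0,u_0}$ on $u_0$); for that one should consult the proof in \cite{SYZ} rather than the regularization estimates quoted in this paper.
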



Recall the comparison principle of Hamilton-Jacobi equation.
\begin{proposition}\label{prop:comparison principle} \cite{L}\cite{IWWY}
	For any given $T>0$, let $v,w\in C(M\times [0, T) ,\R)$  be respectively, subsolution and supersolution of 
	$$
 \partial_x	u+H(x,\partial_x u,u)=0, \quad \forall (x,t)\in M\times (0,T).
	$$ 
	If $w(x,0)\geqslant v(x,0)$ for any $x\in M$, then $w \geqslant v$ on $ M\times [0,T)$.
\end{proposition}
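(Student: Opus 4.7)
My plan is to prove this by the Crandall-Lions doubling-of-variables method. Suppose for contradiction that there exists $T_0\in(0,T)$ with $M_0:=\sup_{M\times[0,T_0]}(v-w)>0$. On the compact set $M\times M\times[0,T_0]\times[0,T_0]$ I would introduce the penalty
$$
\Phi_\epsilon(x,y,t,s)=v(x,t)-w(y,s)-\frac{|x-y|^2}{2\epsilon}-\frac{(t-s)^2}{2\epsilon}-\eta(t+s),
$$
for small parameters $\epsilon,\eta>0$, and let $(\hat x,\hat y,\hat t,\hat s)$ be a maximizer. Standard calibration (choose $\eta$ small enough that $\sup\Phi_\epsilon\geqslant M_0/2>0$) combined with the boundary hypothesis $v(\cdot,0)\leqslant w(\cdot,0)$ gives the usual estimates: the penalty terms $|\hat x-\hat y|^2/\epsilon$ and $(\hat t-\hat s)^2/\epsilon$ vanish as $\epsilon\to 0$, and for $\epsilon$ small both time coordinates $\hat t,\hat s$ lie strictly inside $(0,T_0)$.

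Next I would apply the viscosity sub/supersolution conditions at $(\hat x,\hat t)$ and $(\hat y,\hat s)$ respectively. Writing $p_\epsilon:=(\hat x-\hat y)/\epsilon$ and $\tau_\epsilon:=(\hat t-\hat s)/\epsilon$, these read
$$
\tau_\epsilon+\eta+H(\hat x,p_\epsilon,v(\hat x,\hat t))\leqslant 0,\qquad \tau_\epsilon-\eta+H(\hat y,p_\epsilon,w(\hat y,\hat s))\geqslant 0.
$$
Subtracting and splitting the Hamiltonian difference yields
$$
2\eta\leqslant\bigl[H(\hat y,p_\epsilon,w)-H(\hat x,p_\epsilon,w)\bigr]+\bigl[H(\hat x,p_\epsilon,w)-H(\hat x,p_\epsilon,v)\bigr].
$$
By (H3) the second bracket is bounded above by $\kappa\,(v(\hat x,\hat t)-w(\hat y,\hat s))$, which passes to $\kappa M_0$ in the limit.

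The main obstacle is the first bracket: since $|\hat x-\hat y|$ is only $o(\sqrt{\epsilon})$, the gradient variable $p_\epsilon$ may blow up like $\epsilon^{-1/2}$, and I must keep the quantity $H(\hat y,p_\epsilon,w)-H(\hat x,p_\epsilon,w)$ under control. The remedy is to use the subsolution inequality itself as an a priori bound on $|p_\epsilon|$: the inequality forces $H(\hat x,p_\epsilon,v(\hat x,\hat t))$ to be bounded above, and the superlinearity (H2) together with the boundedness of $v$ then forces $|p_\epsilon|$ to remain bounded along the minimizing sequence. With $|p_\epsilon|\leqslant C$ and $|\hat x-\hat y|\to 0$, the continuity of $H$ in $x$ makes the first bracket vanish.

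Passing to the limit $\epsilon\to 0$ (along a subsequence) I obtain $2\eta\leqslant\kappa\,M(\hat t)$, where $M(t):=\sup_x(v-w)^+(x,t)$. To convert this into a contradiction I would either apply Gronwall to the inequality satisfied by $M(t)$, or — equivalently and more cleanly — perform the doubling of variables on the rescaled functions $\tilde v(x,t):=v(x,t)-\lambda t$ and $\tilde w(x,t):=w(x,t)+\lambda t$ for a single $\lambda$ large compared to $\kappa T_0$, which sharpens the strict sign in the inequality $2\eta\leqslant\kappa M_0$ into an outright contradiction. Letting $\eta\to 0$ then forces $M_0\leqslant 0$, contradicting the assumption and completing the proof.
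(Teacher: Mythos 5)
The paper does not prove this proposition at all---it is quoted from Lions' book and from Ishii--Wang--Wang--Yan \cite{IWWY}---so your doubling-of-variables sketch must be judged on its own merits. The overall strategy is the standard one and most of it is sound, but there is a genuine gap at the step you yourself flag as ``the main obstacle'': the gradient bound on $p_\epsilon$. You claim that the subsolution inequality $\tau_\epsilon+\eta+H(\hat x,p_\epsilon,v(\hat x,\hat t))\leqslant 0$ bounds $H(\hat x,p_\epsilon,v)$ from above, so that superlinearity (H2) bounds $|p_\epsilon|$. This requires $\tau_\epsilon=(\hat t-\hat s)/\epsilon$ to be bounded \emph{below}, which you have not established: the penalization only gives $(\hat t-\hat s)^2/\epsilon\to 0$, i.e.\ $|\tau_\epsilon|=o(\epsilon^{-1/2})$, so $-\tau_\epsilon$ may tend to $+\infty$ and $H(\hat x,p_\epsilon,v)$ need not be bounded above. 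Trying to control $\tau_\epsilon$ from the supersolution inequality instead is circular, since that bound involves $H(\hat y,p_\epsilon,w)$, the very quantity you are trying to control. For merely continuous $v,w$ (which is all the statement assumes) and a Tonelli-type $H$ that does not satisfy the classical structure condition $|H(x,p,u)-H(y,p,u)|\leqslant\omega(|x-y|(1+|p|))$, one needs an extra ingredient here: either a prior Lipschitz bound in $x$ on the subsolution (obtained, e.g., by a sup-convolution in time to first make $v$ Lipschitz in $t$ and then invoking coercivity), or the restriction to Lipschitz sub/supersolutions, which is in fact all the paper ever uses (its barriers $u_-\mp\delta_0 e^{-mt}$ and the semigroup orbits are Lipschitz, in which case $|p_\epsilon|\leqslant\mathrm{Lip}_x(v)$ is immediate).

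A secondary, fixable imprecision concerns the non-monotone $u$-dependence. Your linear shift $\tilde v=v-\lambda t$, $\tilde w=w+\lambda t$ leads, after using (H3), to an inequality of the form $2\lambda\leqslant\kappa\bigl(M_0+2\lambda T_0\bigr)+o(1)$, which yields a contradiction only when $\kappa T_0<1$; so the argument as described closes only on time intervals shorter than $1/\kappa$ and must then be iterated (or replaced by the exponential weight $e^{-\kappa t}$, or by the Gronwall argument on $M(t)$ that you mention as an alternative, which does work). Neither of these repairs is difficult, but as written the proposal does not yet constitute a complete proof.
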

As a consequence, the following Proposition can be obtained.
 \begin{proposition} \label{A1}
 	For any subsolution $\varphi$ of \eqref{1-2}, $T_t^- \varphi\geqslant \varphi $ and $   T_t^+ \varphi\leqslant \varphi $ hold for any $  t >0$.
 \end{proposition}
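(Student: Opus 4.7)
The plan is to derive both inequalities from the comparison principle (Proposition~\ref{prop:comparison principle}), with the bound for $T_t^+$ obtained from the bound for $T_t^-$ by invoking the semigroup relations in Propositions~\ref{A5} and~\ref{A6}.

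First I would prove $T_t^-\varphi \geqslant \varphi$. View $\varphi$, a viscosity subsolution of the stationary equation \eqref{1-2}, as a time-independent function $\tilde{\varphi}(x,t):=\varphi(x)$ on $M\times [0,+\infty)$. Since $\partial_t\tilde{\varphi}\equiv 0$ and $\varphi$ is a stationary subsolution, a standard check with test functions shows that $\tilde{\varphi}$ is a viscosity subsolution of the evolutionary equation \eqref{1-1}. By the defining property of the semigroup $T_t^-$ recalled at the beginning of Section~2, the function $(x,t)\mapsto T_t^-\varphi(x)$ is the unique viscosity solution of \eqref{1-1} with initial datum $\varphi$, hence in particular a supersolution. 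Both functions coincide with $\varphi$ at $t=0$, so Proposition~\ref{prop:comparison principle} yields $T_t^-\varphi(x)\geqslant \varphi(x)$ for all $(x,t)\in M\times[0,+\infty)$.

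Next, to obtain $T_t^+\varphi\leqslant\varphi$, I would exploit the duality encoded in Propositions~\ref{A5} and~\ref{A6} instead of setting up a comparison argument for a backward equation (the paper never writes $T_t^+$ as the solution operator of an explicit PDE, so this indirect route is the cleanest). Applying the monotonicity of $T_t^+$ in Proposition~\ref{A5} to the inequality $T_t^-\varphi\geqslant\varphi$ already established gives
$$T_t^+(T_t^-\varphi)\geqslant T_t^+\varphi.$$
Combining this with the one-sided inverse relation $T_t^+\circ T_t^-\varphi\leqslant\varphi$ from Proposition~\ref{A6} produces
$$\varphi\geqslant T_t^+(T_t^-\varphi)\geqslant T_t^+\varphi,$$
which is the desired inequality.

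The only nontrivial step is the first one, namely verifying that the constant-in-time extension of a stationary subsolution really is an evolutionary subsolution in the viscosity sense. If a smooth test function $\psi$ touches $\tilde{\varphi}$ from above at $(x_0,t_0)$ with $t_0>0$, then $x\mapsto\psi(x,t_0)$ touches $\varphi$ from above at $x_0$, so the stationary subsolution inequality yields $H(x_0,\partial_x\psi(x_0,t_0),\psi(x_0,t_0))\leqslant 0$; moreover $t\mapsto\psi(x_0,t)-\varphi(x_0)$ has a local minimum at $t_0$, whence $\partial_t\psi(x_0,t_0)=0$, and the evolutionary subsolution inequality $\partial_t\psi+H(x,\partial_x\psi,\psi)\leqslant 0$ follows. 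Once this compatibility is in place the rest of the argument is just a short algebraic rearrangement of the semigroup inequalities collected in the appendix.
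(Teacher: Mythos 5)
Your proposal is correct. The first inequality is exactly the argument the paper has in mind when it presents Proposition \ref{A1} as ``a consequence'' of Proposition \ref{prop:comparison principle}: the constant-in-time extension of a stationary subsolution is an evolutionary subsolution (your test-function verification, with $\partial_t\psi(x_0,t_0)=0$ at an interior touching time, is the right check), and comparison with the solution $(x,t)\mapsto T_t^-\varphi(x)$, which shares the initial datum, gives $T_t^-\varphi\geqslant\varphi$. For the second inequality you diverge from the intended symmetric route: the paper's implicit argument would run the same comparison for the backward semigroup, using that $T_t^+$ is the solution semigroup attached to the ``dual'' Hamiltonian $\check H(x,p,u)=H(x,-p,-u)$ (equivalently, to $\mathcal S^+$), so that a subsolution of \eqref{1-2} becomes the appropriate barrier for $T_t^+$. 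Instead you deduce $T_t^+\varphi\leqslant T_t^+\bigl(T_t^-\varphi\bigr)\leqslant\varphi$ from the monotonicity in Proposition \ref{A5} together with $T_t^+\circ T_t^-\leqslant \mathrm{id}$ from Proposition \ref{A6}. This is a valid and clean shortcut -- it avoids writing $T_t^+$ as the solution operator of an explicit PDE, which the paper indeed never does -- at the price of importing Proposition \ref{A6} (an external result from \cite{W-Y}, so no circularity arises) rather than staying purely within the comparison-principle framework. Both halves are sound as written.
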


\section*{Acknowledgements} 
 The second author was supported by National Natural Science Foundation of China (Grant No. 12171096,12231010). The third author was supported by   National Natural Science Foundation of China (Grant No. 12171096).


\end{document}